\patchcmd{\@maketitle}{\newpage}{}{}{} 
\numberwithin{equation}{section}
\theoremstyle{definition}
\newtheorem{definition}{Definition}[section]
\newtheorem{remark}[definition]{Remark}
\theoremstyle{plain}
\newtheorem{theorem}[definition]{Theorem}
\newtheorem{lemma}[definition]{Lemma}
\newtheorem{corollary}[definition]{Corollary}
\newtheorem{prop}[definition]{Proposition}
\patchcmd{\@maketitle}{\newpage}{}{}{} 
\numberwithin{equation}{section}
\newcommand{\C}{\mathbb{C}}
\newcommand{\g}{{g}}
\newcommand{\M}{M}
\newcommand{\N}{\mathbb{N}}
\renewcommand{\O}[1]{\mathcal{O}\left(#1\right)}
\newcommand{\R}{\mathbb{R}}
\newcommand{\vol}[1]{{\text{\normalfont{vol}}}_{#1}}
\renewcommand{\epsilon}{\varepsilon}
\renewcommand{\phi}{\varphi}
\newcommand{\del}{\partial}
\newcommand{\nabbar}{{\nabla}}
\newcommand{\Lap}{\Delta}
\renewcommand{\div}{\text{\normalfont{div}}}
\newcommand{\numberthis}{\addtocounter{equation}{1}\tag{\theequation}}
\renewcommand{\theequation}{\arabic{section}.\arabic{equation}}
\newcommand{\equp}[1]{\mathrel{\mathop{=}^{\mathrm{#1}}}}
\newcommand{\lequp}[1]{\mathrel{\mathop{\leq}^{\mathrm{#1}}}}
\newcommand{\todo}[1]{#1}
\begin{document}


\title[]{Blow-up of waves on singular spacetimes with generic spatial metrics}
\author[D.~Fajman, L.~Urban]{David Fajman, Liam Urban}

\date{\today}

\keywords{Wave equation, Big Bang singularity, blow-up profile, cosmology}

\address{
\begin{tabular}[h]{l@{\extracolsep{8em}}l} 
David Fajman,  & Liam Urban, \\
Faculty of Physics, & Faculty of Mathematics,\\ 
University of Vienna, & University of Vienna, \\
Boltzmanngasse 5, & Oskar-Morgenstern-Platz 1,  \\
1090 Vienna, Austria & 1090 Vienna, Austria\\
David.Fajman@ univie.ac.at, & liam.urban@ univie.ac.at 
\end{tabular}
}

\maketitle
\begin{abstract}
We study the asymptotic behaviour of solutions to the linear wave equation on cosmological spacetimes with Big Bang singularities and show that appropriately rescaled waves converge against a blow-up profile. Our class of spacetimes includes Friedman-Lema\^{\i}tre-Robertson-Walker (FLRW) spacetimes with negative sectional curvature that solve the Einstein equations in presence of a perfect irrotational fluid with $p=(\gamma-1)\rho$. As such, these results are closely related to the still open problem of past nonlinear stability of such FLRW spacetimes within the Einstein scalar field equations. In contrast to earlier works, our results hold for spatial metrics of arbitrary geometry, hence indicating that the matter blow-up in the aforementioned problem is not dependent on spatial geometry. Additionally, we use the energy estimates derived in the proof in order to formulate open conditions on the initial data that ensure a non-trivial blow-up profile, for initial data sufficiently close to the Big Bang singularity and \todo{with less harsh assumptions} for $\gamma<2$.
\end{abstract}


\maketitle

\section{Introduction}

This paper is concerned with the blow-up of waves on cosmological backgrounds toward the Big Bang singularity. More precisely, spacetimes of physical interest considered in the following are spatially flat or hyperbolic Friedman-Lema\^\i tre-Robertson-Walker (FLRW) spacetimes $\left(\overline{M},\overline{g}\right)$ which take the form $\overline{M}=\R^+\times\M$, where $(\M,\g)$ is a three-dimensional closed (connected) Riemannian manifold of vanishing or negative constant sectional curvature, endowed with the metric $\overline{g}=-dt^2+a(t)^2\g$
for some smooth scale factor $a$. These arise by assuming the universe to be spatially homogeneous and isotropic, both of which are in accordance with what is physically observed on large scales. To derive a reasonable model of our universe and the Big Bang in particular, it is necessary to consider such FLRW spacetimes that solve the Einstein equations in presence of matter. The most common approach, and one of the simplest, models the universe as an irrotational ideal fluid with energy density $\rho$ and pressure $p$, and assumes the linear equation of phase $p=(\gamma-1)\rho$ for $\gamma\in(\nicefrac23,2]$. For $\gamma-1\geq 0$, this can be interpreted as the square of the speed of sound $c_s$ within the fluid. The upper bound $\gamma=2$ then corresponds to a stiff fluid, i.e.~$c_s=c=1$, while spacetimes with $\gamma=\nicefrac23$ do not admit a past singularity. An FLRW spacetime with constant spatial sectional curvature $\kappa$ then solves the resulting Einstein-Euler-system if and only if the Friedman equations (see \eqref{eq:Friedmann-1} and \eqref{eq:Friedmann-2}) are satisfied.\\

FLRW spacetimes are covered by the famous Hawking singularity theorem (see \cite{Hawk67}) which states that a vast number of globally hyperbolic spacetimes are geodesically incomplete. More precisely, FLRW spacetimes exhibit a Big Bang in the sense that the Kretschmann scalar blows up as $t\downarrow 0$, thus making the spacetime (past) $C^2$-inextendible. The Strong Cosmic Censorship conjecture postulates that this is, in fact, generically the case in cosmological settings -- else, this would necessitate different inequivalent extensions and hence violate determinism. Thus, it is of vital importance to the validity of the FLRW model when applied to the observable universe that the Strong Cosmic Censorship conjecture holds for spacetimes that are \enquote{close} to FLRW spacetimes, i.e.~that their singularity formation is linearly stable, or even nonlinearly stable, within the respective Einstein equations.\\
For nonnegative sectional curvature, a full picture was obtained in \cite{Rodnianski2014,Rodnianski2018,Speck2018} for the Einstein scalar field and stiff fluid systems, with similar results even available for the scalar field system near subcritical Kasner spacetimes as shown in \cite{fournodavlos2020stable}. The goal of this paper is to provide a step toward the still open problem of nonlinear stability in $\kappa=-1$ in these matter models essentially by analysing the precise blow-up behaviour of the matter component within the Einstein scalar-field system -- namely waves -- on a fixed FLRW background in which the scale factor satisfies the Friedman equations given by an ideal fluid. Since the scalar field model is essentially a sub-case of the stiff Einstein-Euler-system (see \cite[p.~31]{Christ07} for more details), this choice of background, especially in the stiff fluid case, provides a scale factor that one can expect to behave similarly to that of the coupled system while still wholly decoupling the scalar field from the spacetime geometry itself.\\

In this paper, we observe that \textit{the precise Riemannian spatial geometry is irrelevant for our blow-up analysis beyond its influence on the scale factor via the Friedman equations}. In this spirit, we will consider \enquote{warped product spacetimes} of the form $\left(\overline{M},\overline{g}\right)=(\R^+\times\M,-dt^2+a(t)^2\g),$ where $(\M,\g)$ is now simply a closed three-dimensional Riemannian manifold without any assumptions on curvature. Further, we consider such spacetimes \enquote{of type $0$ (resp.~$-1$)} where the scale factor $a$ solves the Friedman and continuity equations (see \eqref{eq:Friedmann-1a} and \eqref{eq:a-cont-eq}) associated with $\kappa=0$ (resp. $\kappa=-1$) within the equation of state $p=(\gamma-1)\rho,\ \gamma\in(\nicefrac23,2]$. For type 0, this simply means $a(t)=t^{\nicefrac2{3\gamma}}$, while the scale factor in type $-1$ behaves like $t^{\nicefrac2{3\gamma}}$ asymptotically as $t\downarrow 0$. In short, we endow an inhomogeneous spatial manifold with the scale factor expected for flat or negative constant sectional curvature \textit{in presence of fluid matter}. This result indicates that inhomogeneities within the spatial geometry have no impact on the blow-up of scalar field matter (or, more generally, that of scalar fields on fluid backgrounds).

To summarize, we will analyze the blow-up of solutions $\psi:\overline{M}\rightarrow\R$ to the wave equation
\[\square_{\overline{g}}\psi=-\del_t^2\psi(t,\cdot)+a(t)^{-2}(\Lap\psi)(t,\cdot)-3\frac{\dot{a}(t)}{a(t)}(\del_t\psi)(t,\cdot)=0\quad \forall\,t>0\,\]
on warped product spacetimes of type $0$ and $-1$, where $\Lap=g^{ij}\nabla_i\nabla_j$ is the Laplace-Beltrami operator with respect to $(\M,g)$. Our main result is:
\begin{theorem}\label{thm:main}
Let
\[\left(\overline{M}=\R^+\times\M,\overline{g}=-dt^2+a(t)^2\g\right)\]
be a warped product spacetime of type $0$ or $-1$, where the scale factor $a$ solves the Friedman and continuity equations, see \eqref{eq:a-cont-eq}, \eqref{eq:Friedmann-1}, \eqref{eq:Friedmann-2}, associated with the equation of state $p=(\gamma-1)\rho$ for $\gamma\in(\frac23,2]$. Let $\psi$ be a smooth solution to the wave equation $\square_{\overline{g}}\psi=0$ on $\left(\overline{M},\overline{g}\right)$.\\
Further, choose a fixed non-zero spatially homogeneous solution $\psi_{\text{hom}}$ to the wave equation -- more precisely, by Remark \ref{rem:stationary-waves}, we choose
\[\psi_{\text{hom}}(t)=\begin{cases}
t^{1-\frac2{\gamma}} & \text{type } $0$, \gamma<2\\
\log(t) & \text{type } $0$, \gamma=2\\
\int_t^\infty a(s)^{-3}\,ds & \text{type } $-1$
\end{cases}\,.\]
Then, there exist unique functions $A\in C^\infty(\M), r\in C^\infty(\overline{M})$ such that
\[\psi(t,x)=A(x)\psi_{\text{hom}}(t)+r(t,x) \]
and where \[\frac{r(t,x)}{\psi_{\text{hom}}(t)}\to 0\text{ as }t\downarrow 0\] uniformly in $x\in\M$.\\
If $\gamma<2$ holds, then ${\psi(t,\cdot)}{\left(\psi_{\text{hom}}(t)\right)^{-1}}$ converges to $A$ in $C^\infty(\M)$.
\end{theorem}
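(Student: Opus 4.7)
The plan is to prove the theorem by extracting the leading-order blow-up behavior directly from the first-order structure of the wave equation, combined with Sobolev-scale energy estimates that are lossless in the scale factor. The key algebraic observation is that the wave equation can be rewritten as
\[
\del_t \bigl( a^3 \del_t \psi \bigr) = a \Lap \psi .
\]
If one can control the right-hand side so that it is integrable in $t$ near $0$ in a suitable Sobolev space, then $F := a^3 \del_t \psi$ has a limit $F_0$ as $t \downarrow 0$, and integrating once more in $t$ extracts the profile $A$.

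\textbf{Step 1 (Energy estimates).} For each $k \in \N_0$ I would consider the standard wave energies of $\Lap^j \psi$ for $0 \le j \le k$,
\[
E_k(t) := \sum_{j=0}^{k} \int_\M \left[ (\del_t \Lap^j \psi)^2 + a(t)^{-2} |\nabbar \Lap^j \psi|^2 \right] d\mu_g ,
\]
noting that $[\del_t, \Lap] = 0$ since $g$ is time-independent, so that $\Lap^j \psi$ again solves $\square_{\overline{g}}(\Lap^j \psi) = 0$. A direct integration by parts then gives
\[
\dot E_k(t) = -2 \tfrac{\dot a}{a}(t) \bigl[ 3 T_k(t) + V_k(t) \bigr] \geq -6 \tfrac{\dot a}{a}(t) E_k(t),
\]
where $T_k, V_k$ are the kinetic and potential contributions. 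Gr\"onwall then yields $E_k(t) \leq E_k(T) \bigl( a(T)/a(t) \bigr)^6$ on $(0, T]$. Equivalently, $F$ is uniformly bounded in $H^k(\M)$ for all $k$; integrating in $t$ then yields $\|\psi(t)\|_{H^k(\M)} \lesssim 1 + |\psi_{\text{hom}}(t)|$.

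\textbf{Step 2 (Profile extraction and rate).} From Step 1, $\|a \Lap \psi(t)\|_{H^k} \lesssim a(t) |\psi_{\text{hom}}(t)| \sim t^{1 - 4/(3\gamma)}$ (with logarithmic corrections at $\gamma = 2$ and in type $-1$), which is integrable at $0$ \emph{exactly} because $\gamma > \nicefrac{2}{3}$. Hence $F(t, \cdot)$ converges in every $H^k(\M)$ to some $F_0$, which lies in $C^\infty(\M)$ by Sobolev embedding. Setting $A(x) := F_0(x)/c_\gamma$, where $c_\gamma$ is the constant with $\del_t \psi_{\text{hom}} = c_\gamma a^{-3}$, the residual $r(t,x) := \psi(t,x) - A(x) \psi_{\text{hom}}(t)$ satisfies $\del_t r = (F - F_0)/a^3$. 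Quantifying: $\|F(t) - F_0\|_{H^k} \lesssim \int_0^t \|a \Lap \psi(s)\|_{H^k}\, ds \lesssim t^{2 - 4/(3\gamma)}$, so $\|r(t)\|_{H^k}$ stays bounded on $(0, T]$, and $\|r/\psi_{\text{hom}}\|_{H^k} \to 0$ as $t \downarrow 0$. Sobolev embedding gives the uniform convergence of $\psi/\psi_{\text{hom}}$ to $A$; when $\gamma < 2$, the polynomial divergence of $\psi_{\text{hom}}$ combined with the $H^k$ control on $r$ upgrades this to convergence of $\psi/\psi_{\text{hom}} \to A$ in $C^\infty(\M)$. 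Uniqueness of $(A, r)$ follows since a second decomposition $(A', r')$ gives $(A - A')\psi_{\text{hom}} = r' - r$, and dividing by $\psi_{\text{hom}}$ and taking $t \downarrow 0$ forces $A = A'$.

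\textbf{Main obstacle.} The delicate point is keeping the energy inequality in Step 1 lossless: any Gr\"onwall step producing an extra factor $a^{-\epsilon}$ would cost the $L^1_t$ integrability of $a \Lap \psi$ on which Step 2 relies. The sharp exponent $6$ in $\dot E_k \geq -6 (\dot a/a) E_k$ comes from the asymmetric coefficients $3$ on $T_k$ versus $1$ on $V_k$; preserving this through the commutators with $\Lap^j$, and --- in the type $-1$ setting --- through the merely asymptotic rather than exact power-law form of the scale factor, is the principal bookkeeping challenge. The boundary case $\gamma = 2$ further requires tracking logarithmic prefactors that replace the polynomial rates.
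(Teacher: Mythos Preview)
Your approach is correct and is essentially the one the paper uses for $\gamma=2$, but you apply it uniformly across all $\gamma$, whereas the paper deliberately splits cases: for $\gamma<2$ it works with energies of the \emph{rescaled} wave $\hat\psi=\psi/\psi_{\text{hom}}$ (obtaining $t^\beta E_N(t,\hat\psi)$ bounded with $\beta<2$, hence absolute continuity of $\hat\psi$ down to $t=0$), and only for $\gamma=2$ does it fall back on the integral identity $\del_t(a^3\del_t\psi)=a\Lap\psi$ that you use throughout. The paper even remarks that your route would also work for $\gamma<2$ but was avoided because it leans on the specific first-order structure of the wave equation, which is less robust when moving to the coupled Einstein system. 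Your argument is more economical and unified; the paper's rescaled-energy argument is more portable to nonlinear settings.

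One small correction: your claim that $\|r(t)\|_{H^k}$ stays bounded on $(0,T]$ is not true for small $\gamma$. From $\|F(t)-F_0\|_{H^k}\lesssim t^{2-4/(3\gamma)}$ and $\del_t r=a^{-3}(F-F_0)$ one gets $\|r(t)\|_{H^k}\lesssim 1+\int_t^{t_0}s^{2-10/(3\gamma)}\,ds$, which diverges for $\gamma\le 10/9$. This does not damage your proof, since what you actually need is $\|r\|_{H^k}=o(|\psi_{\text{hom}}|)$, and indeed $3-10/(3\gamma)>1-2/\gamma$ holds precisely for $\gamma>2/3$; but the intermediate statement should be weakened accordingly. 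Also, your worry about ``preserving the sharp exponent through commutators with $\Lap^j$'' is a non-issue: $\Lap$ commutes exactly with $\square_{\overline g}$, so each $\Lap^j\psi$ satisfies the identical equation and the exponent $6$ is automatic at every order.
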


In type $-1$ warped products, these homogeneous waves exhibit precisely the analogous asymptotic behaviour toward $t\to 0$ as in type $0$. Thus, our main theorem already gives the precise highest order blow-up for waves on both types of backgrounds (type $-1$ being the more central), along with a very strong control on the error terms for the non-stiff setting.\\

\todo{As shown in \cite{Girao19}, it does not even hold in the FLRW setting that any wave exhibits blow-up toward the Big Bang hypersurface.} To furthermore show that the blow-up of highest possible order is actually generic, we will establish \todo{an open condition} on the initial data $(\psi(t_0,\cdot),\del_t\psi(t_0,\cdot))$ on a hypersurface $\M_{t_0}=\{t_0\}\times\M$ for \todo{ small enough  $t_0>0$} such that $A$ \todo{doesn't become zero pointwise, see Theorem \ref{thm:pointwise-blowup-stiff-incl} and Corollary \ref{cor:pointwise-blowup-stiff-incl}. This condition is applicable for any $\gamma\in(\nicefrac23,2]$, but has the downside of requiring initial data control in $H^5$ and requires a number of abstract constants depending on $t_0$, $a$ and the spatial geometry to state it. Restricting ourselves to $\gamma<2$ allows us to formulate significantly simpler conditions that ensure $A$} isn't identically zero (see Theorems \ref{thm:Global-Blowup} and \ref{thm:Global-Blowup-hyp}) or never vanishes (see Theorem \ref{thm:pointwise-blowup}) \todo{by showing that $\psi$ converges to $A\psi_{\text{hom}}$ in an energy sense (see Proposition \ref{thm:energy-convergence})}. In essence and brushing over some of the technical details for now, \todo{the conditions on global blow-up} require that the initial data must be \textit{velocity term dominated} in the sense that the $L^2$ norm of $\del_t\psi(t_0,\cdot)$ must be sufficiently large compared to $L^2$-norms of spatial derivatives of $\psi(t_0,\cdot)$ and $\del_t\psi(t_0,\cdot)$ of up to third order, while \todo{all pointwise conditions require} the initial data to be close to that of an (a priori specified) homogeneous wave in \todo{an approriate Sobolev sense, which can be improved from $H^5$ to $H^3$ in non-stiff regimes}. Thus, \todo{all of the listed conditions} state that waves with almost homogeneous data remain almost homogeneous and specify the degree to which blow-up of homogeneous waves is stable under perturbation of initial data. While \todo{not all of these statements extend} to stiff fluid backgrounds, \todo{even those that don't} still provide a good heuristic for similar arguments in the full scalar field system and as to \todo{what the precise behaviour of matter will be and in what ways some care needs to be taken when coupling matter and geometry.}\\

The behaviour of solutions to linear wave and Klein Gordon equations has been studied extensively, with many recent results as in \cite{Allen09,Franzen18, Bach19,Ring19, Ring21wave, Ring21linear}. In particular, compared to the work done by A.~Alho, G.~Fournodavlos and A.~T.~Franzen in \cite{Franzen18}, where this problem was considered on flat FLRW spacetimes (and Kasner spacetimes) specifically with similar conclusions, we provide a significant extension in multiple ways: Not only do we explicitly include the stiff case $\gamma=2$ in our analysis as much as possible and extend the results to hyperbolic FLRW spacetimes, but even to the mathematically broader class of warped product spacetimes. To this end, we similarly consider energies adapted to the structure of our spacetimes, obtain energy estimates that can be improved to pointwise estimates on waves $\psi$ as well as on waves rescaled by the suspected leading order $\psi_{\text{hom}}$. The key difference to \cite{Franzen18} is that, to move from energy to pointwise estimates, we can no longer just commute the wave operator with arbitrary spatial coordinate derivatives as is possible in the spatially flat setting of \cite{Franzen18}. Instead, we solely rely on the fact that the spatial Laplace-Beltrami operator $\Lap$ is elliptic and commutes with the wave operator for any such step. As such, we don't just extend the results of \cite{Franzen18}, but actually uncover that the asymptotic behaviour of waves, even in the \enquote{special case} of FLRW spacetimes with flat spatial geometry, is driven solely by the scale factor and not in any way influenced by the geometry itself. Note that, while our results turn out to be contained in the more general analysis of {\cite{Ring21linear}} upon close inspection, the Fourier approach used to attain them in {\cite{Ring21linear}} is at least difficult to extend to the full nonlinear problem, where we believe our direct approach may turn out to be more useful. Further, since we are concerned with the isotropic setting, it isn't clear whether the methods contained in {\cite{Ring21wave}} could be extended to our setting: There, anisotropy is crucial in the sense that the second fundamental form must have distinct eigenvalues.\\
Furthermore, we point to \cite{beyer2020relativistic}, where general solutions to the relativistic Euler equations were constructed on Kasner backgrounds that solved the vacuum and Einstein scalar-field equations, thus forming somewhat of a counter-part to our analysis.

Since our approach even circumvents any choice of local or global frame, it also indicates how the strategies in \cite{Rodnianski2014,Rodnianski2018,Speck2018} could be adapted to yield nonlinear stability of the respective Einstein equations in hyperbolic spatial geometry: All rely on using global frames to reach higher order energy estimates and thus sufficiently control the solution variables, which again may not prove to be well adapted to considering the full Einstein system over the various closed manifolds with $\kappa=-1$. Within the correct gauge, we hence suspect that suitable elliptic differential operators which (almost) commute with the evolutionary system should be completely sufficient for this task and thus help extend the results to the hyperbolic case. That our open blow-up conditions hold also suggests this in the sense that sufficiently small perturbations of FLRW initial data will preserve the asymptotic behaviour of the FLRW model.\\

\textbf{Acknowledgements.} L.U. acknowledges the support by the START-Project Y963-N35 of the Austrian Science Fund (FWF). L.U. also thanks the \enquote{Studienstiftung des Deutschen Volkes} for their scholarship. 

\section{Preliminaries}\label{sec:setup}

\subsection{Setting}\label{subsec:setting}

As outlined in the introduction, our starting point for the choice of FLRW background
\begin{equation*}
\left(\overline{M}=I\times\M,\overline{g}=-dt^2+a(t)^2\g_{\kappa}\right)
\end{equation*}
is to consider solutions to the perfect fluid model with equation of state $p=(\gamma-1)\rho$ for $\gamma\in(\nicefrac23,2]$. It is a standard result \cite[p.~345f.
]{ONeill83} that, given constant sectional curvature $\kappa$ on the spatial manifold $(\M,\g)$, the \textit{continuity equation}
\begin{equation}\label{eq:a-cont-eq}
\del_t\rho=-3\frac{\dot{a}}{a}(\rho+p)=-3\frac{\dot{a}}{a}\gamma\rho
\end{equation}
and the \textit{Friedman equations}
\begin{align}
\left(\frac{\dot{a}}{a}\right)^2&=\frac{8\pi}3\rho-\frac{\kappa}{a^2} \label{eq:Friedmann-1}\\
\frac{\ddot{a}}{a}&=-\frac{4\pi}3\left(\rho+3p\right)\, \label{eq:Friedmann-2}
\end{align}
are satisfied, where \eqref{eq:a-cont-eq} and \eqref{eq:Friedmann-1} imply \eqref{eq:Friedmann-2}.
Since \eqref{eq:a-cont-eq} is uniquely solved by 
\begin{equation}\label{eq:density-expansion-relation}
\rho(t)=B\cdot a(t)^{-3\gamma}\,
\end{equation}
for $B\in\R$, it is equivalent to require the scale factor to satisfy
\begin{equation}\label{eq:Friedmann-1a}
\dot{a}=\sqrt{\frac{8\pi B}3a^{2-3\gamma}-\kappa}\ .
\end{equation}
We will further amend this classic cosmological framework as follows:

\begin{itemize}
\item We will call a spacetime $\left(\overline{M},\overline{g}\right)$ a \textbf{warped product spacetime} (or simply warped product) if it satisfies all conditions of an FLRW spacetime except that the spatial manifold need not have constant sectional curvature, i.e.~if 
\begin{equation}\label{eq:wp}
(\overline{M}=I\times\M,\overline{g}=-dt^2+a(t)^2\g)
\end{equation}
for an open interval $I$, a three-dimensional Riemannian manifold $\left(\M,\g\right)$ and $a\in C^\infty(I,\R^+)$. In the following, $\left(\overline{M},\overline{g}\right)$ will always denote a warped product unless stated otherwise.
\item We say a warped product spacetime has a \textbf{Big Bang singularity at $t_{min}$} when $a\to 0$ and $\dot{a}\to \infty$ hold approaching $t_{min}$ (see \cite[p.348, Def. 12.16]{ONeill83}). It is additionally called \enquote{physical} if $\rho\to\infty$ holds toward $t_{min}$.
\item We will restrict ourselves to considering scale factors associated with spatial geometries with nonpositive constant sectional curvature. In particular, we call $\left(\overline{M},\overline{g}\right)$ of \textbf{type 0} (resp. \textbf{type --1}) if $a$ satisfies \eqref{eq:Friedmann-1a} for $\kappa=0$ (resp. $\kappa=-1$). Concretely, $a$ takes the form of \eqref{eq:scale-factor-0} or simply $t^{\frac2{3\gamma}}$ in type $0$, while the behaviour in type $-1$ is discussed in Lemma \ref{lem:scale-factor}. In other words, we assume the scale factor to take the form it takes in the true FLRW setting with the ideal fluid model for our background, and then relax the assumptions on the spatial geometry.\\
Further, as we will later show, we can and thus will assume $I=\R^+$ w.l.o.g. for the domain of our scale factors, with $a(0)=0$.
\item We will always assume $\rho>0$ (i.e.~$B>0$ in \eqref{eq:density-expansion-relation}). However, considering our equation of state $p=(\gamma-1)\rho$ and $\nicefrac23<\gamma\leq2$, we allow for what would be negative pressure in the FLRW model. While the main physical interpretation of this equation of state arises for $\gamma\geq1$ (here, $\gamma-1=c_s^2$, where $c_s$ is the speed of sound within the fluid), we extend to $\gamma>\nicefrac23$ because this allows us to consider all choices of $\gamma$ where a Big Bang singularity actually occurs mathematically. 
Finally, it should be noted that a dust filled universe is associated to $\gamma=1$, while a radiation filled universe corresponds to $\gamma=\nicefrac43$ (see \cite[Chapters 6.4.5, 6.4.6]{Chrusciel10}).
\item $\psi$ will always denote a smooth wave on a spacetime $(M,g)$.
\end{itemize}

\subsection{Analysis of the scale factor}\label{subsec:scale-factor}

Getting a clear grasp on our choice of scale factor is very simple in type $0$ warped product spacetimes: Here, \eqref{eq:Friedmann-1a} with initial condition $a(0)=0$ has the unique solution
\begin{equation}\label{eq:scale-factor-0} a(t)=\left(\frac{3\gamma}2\sqrt{\frac{8\pi B}3}\right)^{\frac2{3\gamma}}t^{\frac2{3\gamma}}\,.\end{equation}
for all $t>0$, hence we can assume
\begin{equation}\label{eq:scale-factor-0-wlog}
a(t)=t^{\frac2{3\gamma}}
\end{equation}
for $\nicefrac23<\gamma\leq 2$ by scaling in all following calculations. In type $-1$, the situation is roughly similar, but the analysis is more involved:

\begin{lemma}\label{lem:scale-factor}
The initial value problem
\begin{equation}\label{eq:ivp}
\dot{a}=f(a):=\sqrt{\frac{8\pi B}3a^{2-3\gamma}+1},\quad a(0)=0
\end{equation}
has a unique solution $a:[0,\infty)\rightarrow\R, B>0, \gamma\in(\nicefrac23,2]$ with the following properties:
\begin{enumerate}
\item $a$ is strictly increasing.
\item $a(t)\geq t\text{ for all }t\geq0$, with equality only at $t=0$.
\item $a\in C([0,\infty),[0,\infty))\cap C^\omega(0,\infty),(0,\infty))$
\item ${a(t)}\simeq{t^{\frac2{3\gamma}}}\text{ as }t\to 0$
\item $\int_t^{\infty}a(s)^{-3}ds<\infty$ for all $t>0$
\item For $t_0>0$ small enough, $0<t<t_0$, one has
\end{enumerate}
\[\int_t^{t_0} a(s)^{-3}ds\simeq\begin{cases}
{t^{1-\frac2{\gamma}}}-t_0^{1-\frac2\gamma}& \gamma<2\\
\log(t_0)-\log(t)& \gamma=2
\end{cases}\,.\]
\end{lemma}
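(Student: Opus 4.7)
My plan is to reduce the entire lemma to an asymptotic analysis of the one-dimensional integral obtained by separation of variables, since the ODE is autonomous. Concretely, I will set
\[ F(y) = \int_0^y \frac{dx}{f(x)} = \int_0^y \frac{dx}{\sqrt{\tfrac{8\pi B}{3}x^{2-3\gamma}+1}}, \qquad y \geq 0. \]
Because $\gamma > \nicefrac{2}{3}$ forces $2-3\gamma < 0$, the integrand has the pointwise asymptotic $\sqrt{3/(8\pi B)}\, x^{(3\gamma-2)/2}$ as $x \to 0^+$ with strictly positive exponent, so $F$ is well-defined, continuous, strictly increasing, with $F(0)=0$ and $F(y) \to \infty$ as $y \to \infty$ (the integrand approaching $1$). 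Setting $a := F^{-1}:[0,\infty) \to [0,\infty)$ gives the unique candidate solution, satisfying $\dot{a} = f(a)$ on $(0, \infty)$ by the inverse function theorem; the singular initial condition $a(0) = 0$ is encoded in $F(0)=0$.

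With $a$ constructed, properties (1), (2), and (3) follow cleanly. Monotonicity in (1) is inherited from $F$, and continuity of $a$ at $0$ from the continuity of $F$. On $(0, \infty)$ the vector field $f$ is real-analytic (the expression under the square root is strictly positive there), and classical flow theory yields real-analyticity of $a$, giving (3). For (2), the strict inequality $f(x) > 1$ for $x > 0$ forces $F(y) < y$ on $(0, \infty)$, hence $a(t) > t$ for $t > 0$, with equality only at $t=0$.

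The asymptotic (4) is the heart of the argument: I will expand $F$ near zero using the integrand asymptotic to obtain
\[ F(y) \sim \sqrt{\tfrac{3}{8\pi B}}\cdot \tfrac{2}{3\gamma}\, y^{\frac{3\gamma}{2}} \qquad \text{as } y \to 0^+, \]
and then invert to get $a(t) \sim C_\gamma\, t^{\frac{2}{3\gamma}}$ for a positive explicit constant $C_\gamma$. Both (5) and (6) reduce to this asymptotic together with property (2). For (5), property (2) yields $a(s)^{-3} \leq s^{-3}$, which is integrable on $[t,\infty)$ for any $t > 0$. For (6), I choose $t_0$ small enough that $a(s) \simeq s^{2/(3\gamma)}$ uniformly on $(0, t_0]$, whence $a(s)^{-3} \simeq s^{-2/\gamma}$ on that interval, and a direct integration splitting the cases $\gamma < 2$ (antiderivative $\tfrac{s^{1-2/\gamma}}{1 - 2/\gamma}$, with the $t^{1-2/\gamma}$ term dominating as $t \to 0$ since $1 - 2/\gamma < 0$) and $\gamma = 2$ (antiderivative $\log s$) yields the claimed equivalence.

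The only genuine obstacle I foresee is the degeneracy of the ODE at $t=0$: since $f(a) \to \infty$ as $a \to 0^+$, the right-hand side is not Lipschitz at the initial point and Picard--Lindel\"of does not apply, so one cannot invoke it to rule out non-classical solutions. The separation-of-variables construction bypasses this, but I will need to verify uniqueness explicitly, which again reduces to $F$ being a bijection $[0,\infty) \to [0,\infty)$. Beyond that, everything is bookkeeping of elementary asymptotic integrals.
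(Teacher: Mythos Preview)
Your proposal is correct and takes a genuinely different, more elementary route than the paper. The paper argues via standard ODE theory: it considers the shifted initial value problem $\dot a=f(a),\ a(t_0)=a_0>0$, invokes smoothness of $f$ on $(0,\infty)$ to obtain a unique maximal solution, bounds it linearly to force $t_{\max}=\infty$, shows $a\to 0$ at the left endpoint so that one may shift to $t_{\min}=0$, and then runs a separate translation argument to rule out distinct solutions emanating from $a(0)=0$. For analyticity it invokes Cauchy--Kovalevskaya, and for the asymptotic~(4) it substitutes $b=a^{3\gamma/2}$, computes $\dot b\to \tfrac{3\gamma}{2}\sqrt{8\pi B/3}>0$, and applies l'H\^opital.

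Your separation-of-variables reduction packages existence, uniqueness, global definition, and the asymptotic~(4) into a single object~$F$. The bijectivity of $F$ handles what the paper needs several ODE-theoretic steps for; your asymptotic $F(y)\sim c\,y^{3\gamma/2}$ is exactly the inverse of the paper's $b$-substitution. The trade-off is that the paper's approach generalises more readily to non-autonomous or higher-dimensional settings, whereas yours exploits the scalar autonomous structure fully and is shorter. One small point worth making explicit when you write it up: your uniqueness claim is not literally ``$F$ is a bijection'' but rather that any solution $\tilde a$ with $\tilde a(0^+)=0$ satisfies $(F\circ\tilde a)'=1$, hence $F(\tilde a(t))=t+C$, and continuity at $0$ forces $C=0$.
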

\begin{proof}
The first two points are immediate once recognizing that any solution with this initial condition must immediately be nonnegative.\footnote{Else, even if $a^{2-3\gamma}$ is well-defined, $a$ must be bounded away from $0$ for the square root to be well defined, so the initial condition can't be satisfied.} The fifth point also immediately follows from the second, once we have shown $a$ to be defined on $(0,\infty)$.\\
We now move to the shifted initial value problem
\begin{equation}\label{eq:shifted-ivp}
\dot{a}=f(a),\quad a(t_0)=a_0>0 
\end{equation}
at $t_0>0$. Since $f:(0,\infty)\longrightarrow\R$ is smooth, a unique smooth real-valued solution exists on some maximal interval of existence $I=(t_{min},t_{max})$, and as $f$ is monotonously decreasing, one has
\[a(t)\leq \sqrt{1+\frac{8\pi B}3a_0^{2-3\gamma}}\cdot(t-t_0)+a_0\,.\]
In particular, it follows that $t_{max}=\infty$. 

Furthermore, because $a$ is strictly increasing and positive on $I$, $a$ converges approaching $t_{min}$, and due to maximality, it follows that $a(t)\rightarrow0$ as $t\downarrow t_{min}$. Finally, $t\in(0,\infty)\mapsto a(t+t_{min})$ now solves \eqref{eq:ivp} -- or equivalently, we can assume $t_{min}=0$ without loss of generality for a solution of \eqref{eq:shifted-ivp} since no solution can be extended past $0$.\\

Additionally, $f$ extends to a holomorphic function on the simply connected set\linebreak $\C\backslash\{z\in\C,\,\text{Im}(z)\geq 0\}$ by appropriate choice of logarithm, thus \eqref{eq:shifted-ivp} has a unique holomorphic local solution around any $t_0\in I$ with initial condition $a(t_0)\in\R$ by the Cauchy-Kovalevskaya Theorem \cite[p.46f.]{Folland95}. Hence, this local uniqueness yields a real analytic solution on $I=(0,\infty)$ to the real-valued differential equation that must agree with any real solution on $I$, so any real solution on $I$ must be analytic.\\

On the other hand, assume there were two different (maximally extended) solutions $a_1,a_2$ to \eqref{eq:ivp}, then some $\tilde{a}>0$ has to exist such that $a_1(t_1)=\tilde{a}=a_2(t_2)$ for some $0<t_1<t_2$. However, both $a_1$ and $t\mapsto a_2(t+t_2-t_1)$ locally solve the initial value problem
\[\dot{\phi}(t)=f(a),\ \phi(t_1)=\tilde{a}\,.\]
Its solutions are locally unique and (as argued before) analytic on their open existence intervals, hence any two local solutions are extendible to a common maximal solution. In par\-ti\-cu\-lar, it would follow that $a_2(t_2-t_1)=a_1(0)=0$. Since $a_2$ is strictly increasing, $t_2-t_1=0$ would have to hold, which is a contradiction. Hence, \eqref{eq:ivp} has a unique continuous solution on $[0,\infty)$ which must then also be analytic on $(0,\infty)$.\\

To prove the asymptotic behaviour of $a$, consider $b(t):=a(t)^\frac{3\gamma}2$ which satisfies
\begin{equation*}
\dot{b}=\frac{3\gamma}2a^{\frac{3\gamma}2-1}\dot{a}=\frac{3\gamma}2\sqrt{a^{3\gamma-2}+\frac{8\pi B}{3}}.\\
\end{equation*}
We obtain $\displaystyle\lim_{t\to 0}\dot{b}(t)=\frac{3\gamma}2\sqrt{\frac{8\pi B}3}> 0$. By the l'Hospital rule, it now follows that
\[\lim_{t\to 0}\frac{a(t)}{t^{\frac2{3\gamma}}}=\left(\lim_{t\to 0}\frac{b(t)}{t}\right)^{\frac2{3\gamma}}=\lim_{t\to 0}\left(\dot{b}(t)\right)^{\frac2{3\gamma}}>0\,,\]
which shows the fourth point, immediately yielding the final one as well.
\end{proof}

\subsection{The wave operator and homogeneous waves}\label{subsec:basics}

Before moving on to the energy estimates fundamental to our results, we quickly derive some basic properties of the wave operator. By simply writing out the Christoffel symbols involved, one even sees on warped product spacetimes (see \eqref{eq:wp}) that the wave operator takes the form
\[\square_{\overline{g}}\phi(t,\cdot)=-\del_t^2\phi(t,\cdot)+a(t)^{-2}\Lap\phi(t,\cdot)-3\frac{\dot{a}(t)}{a(t)}\del_t\phi(t,\cdot) \quad \forall\,t>0,\phi\in C^\infty\left(\overline{M}\right),\]
where $\Lap\equiv\Lap_{\g}=\g^{ij}\nabla_i\nabla_j$ is the Laplace operator on $\M$. Thus:

\begin{corollary}\label{cor:wave-formulas}
For any smooth wave $\psi$ and any $t>0$, it holds that
\[\left(\del_t^2\psi\right)(t,\cdot)=a(t)^{-2}\Lap\psi(t,\cdot)-3\frac{\dot{a}(t)}{a(t)}(\del_t\psi)(t,\cdot).\]
Furthermore, for any $N\in\N_0$, $\Lap^N\psi: (t,x)\mapsto\left(\Lap^N\psi(t,\cdot)\right)(x)$ is also a smooth wave.
\end{corollary}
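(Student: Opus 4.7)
The first assertion is a direct algebraic consequence of the formula for $\square_{\overline{g}}$ given just before the corollary: since $\psi$ solves $\square_{\overline{g}}\psi=0$, we simply rearrange
\[0=-\del_t^2\psi(t,\cdot)+a(t)^{-2}\Lap\psi(t,\cdot)-3\frac{\dot a(t)}{a(t)}\del_t\psi(t,\cdot)\]
to isolate $\del_t^2\psi$. No extra work is needed here beyond invoking the coordinate expression of the wave operator on a warped product.

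For the second assertion, the plan is to show that the spatial Laplace--Beltrami operator $\Lap_{\g}$ commutes with $\square_{\overline{g}}$, so that the claim follows by induction on $N$. The base case $N=0$ is trivial. For the inductive step, it suffices to prove $\square_{\overline{g}}(\Lap\phi)=\Lap(\square_{\overline{g}}\phi)$ for arbitrary $\phi\in C^\infty(\overline{M})$, since then applying this identity to a smooth wave $\Lap^N\psi$ (smooth by standard elliptic regularity, and assumed to be a wave by induction) immediately yields that $\Lap^{N+1}\psi=\Lap(\Lap^N\psi)$ is again a smooth wave.

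To verify the commutation, recall that $\Lap=\g^{ij}\nabla_i\nabla_j$ depends only on the spatial metric $\g$ on $\M$, which is time-independent. In particular, its coefficients depend only on the spatial variables, so $\Lap$ commutes with $\del_t$ and $\del_t^2$ (viewed as operators on $C^\infty(\overline{M})$), as well as with multiplication by any function depending only on $t$, such as $a(t)^{-2}$ or $\dot a(t)/a(t)$. Using the coordinate form of $\square_{\overline{g}}$ above, we compute
\[\square_{\overline{g}}(\Lap\phi)=-\del_t^2(\Lap\phi)+a^{-2}\Lap(\Lap\phi)-3\frac{\dot a}{a}\del_t(\Lap\phi)=\Lap\bigl(-\del_t^2\phi+a^{-2}\Lap\phi-3\tfrac{\dot a}{a}\del_t\phi\bigr)=\Lap(\square_{\overline{g}}\phi),\]
which is the desired commutation relation.

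There is no real obstacle here; the main subtlety is simply to remember that $\Lap=\Lap_{\g}$ acts purely in the spatial slice and does not see the time variable, so that every operator appearing in the warped-product expression of $\square_{\overline{g}}$ commutes with it termwise. With the commutation in hand, induction finishes the proof.
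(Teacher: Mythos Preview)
Your proof is correct and aligns with the paper's treatment: the paper states the corollary immediately after the coordinate expression for $\square_{\overline{g}}$ without further argument, treating both claims as evident from that formula, and you have simply spelled out the commutation $[\Lap,\square_{\overline{g}}]=0$ that underlies the second assertion. One minor remark: the appeal to elliptic regularity is unnecessary, since $\Lap$ is a differential operator with smooth coefficients and $\psi$ is assumed smooth, so $\Lap^N\psi$ is smooth by direct differentiation.
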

Note that the latter statement, along with the fact that $\Lap$ is elliptic, will be central to yielding higher order energy estimates and with it sufficiently strong control on $\psi$.

\begin{remark}\label{rem:stationary-waves}

Homogeneous waves are thus given by the differential equation
\[\del_t(a^3\del_t\psi)(t)=0\quad \forall t>0\]
after rearranging. In type $0$, they hence take the explicit form
\begin{equation}\label{eq:hom-waves-flat}
\psi(t)=\begin{cases}
C_1t^{1-\frac2{\gamma}}+C_2 & \gamma\in(\nicefrac23,2)\\
C_1\log(t)+C_2& \gamma=2
\end{cases}
\end{equation}
while one can use the fifth point in Lemma \ref{lem:scale-factor} for type $-1$ to write the homogeneous waves as
\begin{equation}
\psi(t)=C_1\int_t^\infty a(s)^{-3}\,ds+C_2\ \text{for}\ C_1,C_2\in\R\,.
\end{equation}
In particular, in either setting, \textbf{we thus expect waves to behave like $t^{1-\nicefrac2{\gamma}}$ towards the Big Bang singularity in warped product spacetimes that don't arise from stiff fluids, and like $\log(t)$ in the stiff case} (even for type $-1$, see the final point in Lemma \ref{lem:scale-factor}). In the following, when referring to homogeneous waves, it will always be assumed that they vanish in the far field where this is possible ($C_2=0$) and are not constant ($C_1\neq0$).
\end{remark}

\section{Energy estimates}\label{sec:energy-estimates}

For a smooth function $\phi:\overline{M}\rightarrow\R$, consider the following energies:
\begin{align}
E(t,\phi)&=\, E(\phi(t,\cdot))=\int_{\M}\left\lvert\del_t\phi(t,\cdot)\right\rvert^2+a(t)^{-2}\left\lvert{\nabbar}\phi(t,\cdot)\right\rvert_{\g}^2\,\vol{\M} \label{eq:energy-def-1}\\
E_N(t,\phi)&=E\left({\Lap}^N\phi(t,\cdot)\right)\label{eq:energy-def-2}
\end{align}

\subsection{Wave energy estimates}\label{subsec:energy-estimates}

For homogeneous waves, the energy of order $N=0$ is easily seen to take form
\[E(t,\psi_{\text{hom}})=\left\lvert Ca(t)^{-3}\right\rvert^2=C^2a(t)^{-6},\]
and in type $0$ specifically
\[E(t,\psi_{\text{hom}})= C^2t^{-\frac4{\gamma}}\]
for $C\in\R, C\neq 0$, by \eqref{eq:scale-factor-0-wlog}. The next proposition thus extends this observation to all waves:

\begin{prop}\label{prop:energy-1}
For any $N\in\N$ and $0<t<t_0$, the following estimate holds on any warped product spacetime $\left(\overline{M},\overline{g}\right)$ of type $0$ or $-1$:
\begin{equation*}
a(t)^{6}E_N(t,\psi)\leq a(t_0)^6 E_N(t_0,\psi)\,.
\end{equation*}
\end{prop}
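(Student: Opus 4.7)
My plan is to reduce to the case $N = 0$ and then extract the required inequality from a direct differentiation of the weighted energy. By Corollary \ref{cor:wave-formulas}, $\Lap^N \psi$ is itself a smooth wave on $\left(\overline{M}, \overline{g}\right)$, so it is enough to show that for any smooth wave $\phi$ on a type $0$ or type $-1$ warped product, the function $t \mapsto a(t)^6 E(t, \phi)$ is non-decreasing on $(0, \infty)$.

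To this end, I would rewrite
\[a(t)^6 E(t, \phi) = \int_{\M} \Big[a^6 \lvert \del_t \phi \rvert^2 + a^4 \lvert \nabbar \phi \rvert_{\g}^2\Big] \vol{\M}\]
and differentiate under the integral. In the resulting $\del_t \phi \cdot \del_t^2 \phi$ contribution I would substitute the wave equation from Corollary \ref{cor:wave-formulas} to produce $2 a^4 \del_t \phi \cdot \Lap \phi - 6 a^5 \dot{a} \lvert \del_t \phi \rvert^2$. The spatial cross term $2 a^4 g^{ij} \nabbar_i \phi \cdot \nabbar_j \del_t \phi$ would be handled by integrating by parts on the closed manifold $\M$ (no boundary contributions) to yield $-2 a^4 \del_t \phi \cdot \Lap \phi$.

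After these substitutions, the two $\del_t \phi \cdot \Lap \phi$ terms cancel, and the $-6 a^5 \dot{a} \lvert \del_t \phi \rvert^2$ piece cancels exactly against the $6 a^5 \dot{a} \lvert \del_t \phi \rvert^2$ generated by differentiating the factor $a^6$. The net result is
\[\frac{d}{dt}\!\left[a(t)^6 E(t, \phi)\right] = 4 a^3 \dot{a} \int_{\M} \lvert \nabbar \phi \rvert_{\g}^2 \, \vol{\M} \geq 0,\]
since $\dot{a} > 0$ on type $0$ warped products by \eqref{eq:scale-factor-0-wlog} and on type $-1$ ones by Lemma \ref{lem:scale-factor}. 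Integration from $t$ to $t_0$ and subsequent application of the resulting inequality to $\phi = \Lap^N \psi$ then yields the proposition.

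The computation is a routine energy estimate and I expect no significant obstacle once the appropriate integration by parts is performed. The only slightly subtle point is the choice of weight: a power of $a$ strictly smaller than $6$ would leave a strictly negative $\lvert \del_t \phi \rvert^2$ term in the derivative and destroy monotonicity, so $a^6$ is precisely the borderline weight at which these contributions cancel exactly -- a weight already singled out by the explicit form of the energy for homogeneous waves computed just before the statement.
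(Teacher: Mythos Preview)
Your proof is correct. The paper reaches the same identity
\[
\frac{d}{dt}\big[a(t)^6 E(t,\psi)\big]=4a^3\dot{a}\int_{\M}\lvert\nabbar\psi\rvert_{\g}^2\,\vol{\M}\geq 0
\]
but via the energy--momentum tensor: it contracts $T[\psi]$ with the multiplier $X=a^3\del_t$, computes the spacetime divergence $\overline{\nabla}^\mu J^X_\mu[\psi]=-2\dot{a}\lvert\nabbar\psi\rvert_{\g}^2$, and then applies the divergence theorem on the slab $[t,t_0]\times\M$ to obtain \eqref{eq:key-divergence-theorem}. Your route---direct time differentiation of $a^6E$ together with a spatial integration by parts on the closed slice---is the more elementary, purely PDE version of the same computation, avoiding the energy--flux formalism entirely. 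The paper's packaging has the advantage of fitting into the standard vector-field framework and making the choice of multiplier transparent; your approach is shorter and requires no geometric machinery beyond the wave equation itself. Either way, the reduction to $N=0$ via Corollary~\ref{cor:wave-formulas} and the positivity $\dot{a}>0$ are used identically.
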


Conceptually, we can stick fairly close to the proof of Propositon 2.1 in \cite{Franzen18} for this estimate while being cautious that everything can be generalized to warped product spacetimes, but we repeat the argument here for the sake of completeness.
\begin{definition}\label{def:energy-flux}
The \textbf{energy flux} $J^X[\phi]$ is the covector field defined by the projection of the energy-momentum tensor of scalar field matter along the vector field $X\in\mathcal{X}(M)$, i.e.~one defines
\[J_{\mu}^X[\phi]=X^{\nu}T_{\mu\nu}[\phi]=X^{\nu}\left(\overline{\nabla}_{\mu}\phi\overline{\nabla}_{\nu}\phi-\frac12\overline{g}_{\mu\nu}\overline{\nabla}^{\sigma}\phi\overline{\nabla}_{\sigma}\phi\right)\,.\]
\end{definition}
for a smooth function $\phi:\overline{M}\rightarrow\R$. Note:
\[J_0^{\del_t}[\phi]=T_{00}[\phi]=\frac12 \left(\lvert\del_t\phi\rvert^2+a(t)^{-2}\left\lvert\nabbar\phi\right\rvert_{\g}^2\right)\]
\begin{proof}[Proof of Proposition \ref{prop:energy-1}]
Set $X=a(t)^3\del_t$. Then, one computes:
\begin{align*}
\overline{\nabla}^\mu X^\nu=\,&{\overline{g}}^{\mu\sigma}\left[\left(\del_\sigma a^3\right)\del_t^\nu+a^3\overline{\nabla}_\sigma\del_t^\nu\right]\\
=&\begin{cases}
-3a^2\dot{a}& \mu=\nu=0\\
\overline{g}^{\mu\sigma}a^3\bar{\Gamma}_{0\sigma}^\nu=\overline{g}^{\mu\nu}\dot{a}a^2=\g^{\mu\nu}\dot{a} & \mu,\nu\neq 0\\
0&\text{else}
\end{cases}
\end{align*}
Thus, recalling that, since $\psi$ is a wave, the divergence of $T$ vanishes, one calculates
\begin{align*}
\overline{\nabla}^\mu\left(J_\mu^X[\psi]\right)=\,&\left(\overline{\nabla}^\mu X^\nu\right)T_{\mu\nu}[\psi]
\\
=\,&-3a^2\dot{a}T_{00}[\psi]+\dot{a}\g^{ij}T_{ij}[\psi]\\
=\,&-2\dot{a}\left\lvert\nabbar\psi\right\rvert_{\g}^2\leq0 \numberthis\label{eq:divergence-energy-flux}
\end{align*}
since $\dot{a}>0$ by \eqref{eq:Friedmann-1a}. The induced volume form $\vol{\M_s}$ on $\M_s=\{s\}\times\M$ is given by $\vol{\M_s}=a(s)^3\vol{\M}$ by the Jacobi transformation law. Now, we choose the orientation on $\overline{M}$ such that $(-\del_t,\mathcal{B})$ is positively oriented for any positively oriented local basis $\mathcal{B}$ on $T\M$. Integrating over the volume form $\vol{\overline{M}}$ associated with said orientation, the divergence theorem yields
\begin{align*}
-\int_t^{t_0}\int_{\M_s}\div\left(J^X[\psi]\right)\vol{\M_s}ds=&\,\int_{[t,t_0]\times\M}\div\left(J^X[\psi]\right)\vol{\overline{M}}\\
=\,&\int_{\M_{t_0}}J_0^X[\psi]\vol{\M_{t_0}}-\int_{\M_{t}} J_0^X[\psi]\vol{\M_t}\\
=\,&\frac12a(t_0)^6E(t_0,\psi)-\frac12a(t)^{6}E(t,\psi),
\end{align*}
which can be rearranged to
\begin{equation}\label{eq:key-divergence-theorem}
a(t)^6E(t,\psi)=a(t_0)^{6}E(t_0,\psi)+2\int_{t}^{t_0}\int_{\M_s}\div\left(J^X[\psi]\right)\vol{\M_s}ds\,.
\end{equation}
Since the divergence term is nonpositive by \eqref{eq:divergence-energy-flux}, the statement now follows.
\end{proof}

\begin{corollary}\label{cor:pointwise-estimate}
In the setting of Proposition \ref{prop:energy-1}, with $(t,x)\in\overline{M},\,0<t<t_0$, the following estimate holds for any smooth wave $\psi$:
\begin{align*}
\left\lvert\Lap^N\psi(t,x)\right\rvert&\,\leq Ca(t_0)^3\left(\int_t^{t_0}a(s)^{-3}\,ds\right)\left(\sqrt{E_N(t_0,\psi)}+\sqrt{E_{N+1}(t_0,\psi)}\right)\\
&\quad +\left\lvert\Lap^N\psi(t_0,x)\right\rvert\,\numberthis
\end{align*}
where $C>0$ is a $\g$-dependent constant. In particular, in type $0$ warped products associated with $\gamma\in(\nicefrac23,2]$, it follows that
\begin{align*}
\left\lvert\Lap^N\psi(t,x)\right\rvert&\,\leq C{t_0}^{\frac2{\gamma}}\left(\sqrt{E_N(t_0,\psi)}+\sqrt{E_{N+1}(t_0,\psi)}\right)\begin{cases}
\frac{t^{1-\frac2{\gamma}}-{t_0}^{1-\frac{2}{\gamma}}}{\frac2{\gamma}-1} & \frac23<\gamma<2\\
\log(t_0)-\log(t) & \gamma=2\,
\end{cases}\\
&\quad +\left\lvert\Lap^N\psi(t_0,x)\right\rvert\numberthis\label{eq:pointwise-flat-precise}
\end{align*}
and this extends to warped products of type $-1$, choosing small enough $t_0>0$ and up\-da\-ting $C\equiv C(\g,t_0,\rho(t_0))$.
\end{corollary}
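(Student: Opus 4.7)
The plan is to reduce the pointwise estimate to the energy estimate of Proposition \ref{prop:energy-1} via the fundamental theorem of calculus in time combined with a Sobolev embedding on the spatial factor. Concretely, for $0<t<t_0$ and $x\in\M$, I would start from
\[\Lap^N\psi(t,x)=\Lap^N\psi(t_0,x)-\int_t^{t_0}\del_s\Lap^N\psi(s,x)\,ds,\]
so the task reduces to bounding $|\del_s\Lap^N\psi(s,x)|$ uniformly in $x$ by spatial $L^2$-quantities whose time evolution is controlled by Proposition \ref{prop:energy-1}. Note that $\Lap^N\psi$ is itself a smooth wave by Corollary \ref{cor:wave-formulas}, so Proposition \ref{prop:energy-1} applies directly at each order $N$.

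For the pointwise control, I would exploit that $\M$ is a closed three-manifold, so the Sobolev embedding $H^2(\M)\hookrightarrow C^0(\M)$ holds with a constant depending only on $(\M,\g)$. Combined with elliptic regularity for the Laplace--Beltrami operator, which yields $\|f\|_{H^2(\M)}\leq C(\|f\|_{L^2(\M)}+\|\Lap f\|_{L^2(\M)})$ on any closed Riemannian manifold, and the trivial commutation $\del_s\circ\Lap=\Lap\circ\del_s$ (since $\Lap$ acts only spatially), this gives
\[|\del_s\Lap^N\psi(s,x)|\leq C\left(\|\del_s\Lap^N\psi(s,\cdot)\|_{L^2(\M)}+\|\del_s\Lap^{N+1}\psi(s,\cdot)\|_{L^2(\M)}\right).\]
By the definition \eqref{eq:energy-def-2} of $E_N$, each $L^2$-norm on the right is bounded by $\sqrt{E_N(s,\psi)}$ and $\sqrt{E_{N+1}(s,\psi)}$ respectively, and Proposition \ref{prop:energy-1} then upgrades these to $\sqrt{E_N(s,\psi)}\leq a(t_0)^3a(s)^{-3}\sqrt{E_N(t_0,\psi)}$ (and similarly for $N+1$). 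Integrating in $s$ over $[t,t_0]$ yields the general inequality with $C$ depending only on $(\M,\g)$.

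For the explicit form in type $0$, one substitutes $a(s)=s^{2/(3\gamma)}$ and evaluates the elementary integral $\int_t^{t_0}s^{-2/\gamma}ds$ in the two regimes $\gamma<2$ and $\gamma=2$, matching \eqref{eq:pointwise-flat-precise}. The extension to type $-1$ rests on the asymptotics $a(s)\simeq s^{2/(3\gamma)}$ as $s\downarrow 0$ from Lemma \ref{lem:scale-factor}, which let us compare $a(t_0)^3\int_t^{t_0}a(s)^{-3}ds$ to the type $0$ closed forms for small enough $t_0$; the $\simeq$-constants are absorbed into an updated $C=C(\g,t_0,\rho(t_0))$, the $\rho(t_0)$-dependence entering through the constant $B$ in \eqref{eq:density-expansion-relation} that determines the implicit constant in the asymptotic comparison. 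The one step requiring some care is the Sobolev/elliptic bound on a general closed three-manifold (so that the constant is uniform in $s$ and depends only on $(\M,\g)$), but this is standard; everything else is a direct application of Proposition \ref{prop:energy-1} and elementary integration.
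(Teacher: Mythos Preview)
Your proposal is correct and follows essentially the same approach as the paper: fundamental theorem of calculus in time, Sobolev embedding $H^2(\M)\hookrightarrow C^0(\M)$ combined with elliptic regularity for $\Lap$ to pass from $L^\infty$ to $L^2$, then the energy bound of Proposition \ref{prop:energy-1} and explicit integration of $a(s)^{-3}$. The paper's write-up is nearly identical, citing \cite[p.~463, Theorem~27]{Besse08} for the elliptic step and Lemma \ref{lem:scale-factor} for the type $-1$ extension.
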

\begin{proof}
Applying in $(\ast)$ both a standard $L^2$ estimate for elliptic operators of second order (see \cite[p. 463, Theorem 27]{Besse08} and that $\Lap$ is elliptic for any Riemannian metric $\g$ (see \cite[p. 462, Example 19]{Besse08}, one computes
\begin{align*}
\Big\lvert\Lap^N\psi&(t,\cdot)\Big\rvert\leq\left\lvert\int_t^{t_0}\del_t\Lap^N\psi(s,x)ds\,\right\rvert+\left\lvert\,\Lap^N\psi(t_0,x)\right\rvert\\
&\leq\,\int_t^{t_0}\left\|\del_t\Lap^N\psi(s,\cdot)\right\|_{L^{\infty}\left(\M\right)}ds\,+\,\left\lvert\Lap^N\psi(t_0,x)\right\rvert\\
&\lequp{(\ast)}\,C\cdot\int_t^{t_0} \left(\left\|\del_t\Lap^N\psi(s,\cdot)\right\|_{L^2\left(\M\right)}+\left\|\del_t\Lap^{N+1}\psi(s,\cdot)\right\|_{L^2\left(\M\right)}\right)ds\,+\,\left\lvert\Lap^N\psi(t_0,x)\right\rvert\\
&\leq\,C\cdot\int_t^{t_0} \left(\sqrt{E_N(s,\psi)}+\sqrt{E_{N+1}(s,\psi)}\right)ds\,+\,\left\lvert\Lap^N\psi(t_0,x)\right\rvert\\
&\lequp{(\ast\ast)}\,C\cdot\left(\sqrt{E_N(t_0,\psi)}+\sqrt{E_{N+1}(t_0,\psi)}\right)\int_t^{t_0}\frac{a(t_0)^3}{a(s)^3}ds\,+\,\left\lvert\Lap^N\psi(t_0,x)\right\rvert\,,
\end{align*}
where $(\ast\ast)$ follows from Proposition \ref{prop:energy-1}.\\
In type $0$, \eqref{eq:pointwise-flat-precise} is simply obtained by computing the integral. Moving on to type $-1$, by the last point in Lemma \ref{lem:scale-factor}, one has
\[\int_t^{t_0}a(s)^{-3}\,ds\leq C_{t_0,\rho(t_0)} \int_t^{t_0}\left(s^{-\frac2{3\gamma}}\right)^3\,ds=\int_t^{t_0}s^{\frac2\gamma}\,ds\]
for $t_0>0$ small enough, and thus the final claim follows. 
\end{proof}

\subsection{Rescaled energy estimates}\label{subsec:rescaled-en-est}

To derive a more precise asymptotic behaviour, it is now intuitive to consider the analogous energies for waves rescaled by the leading order suggested by Proposition \ref{prop:energy-1} and Corollary \ref{cor:pointwise-estimate}. We start with type $0$ warped products:
\begin{prop}\label{prop:energy-2}
Let $\nicefrac23<\gamma<2$ and set
\[\beta=\max\left(\frac4{3\gamma},4-\frac4{\gamma}\right)\,.\]
For a smooth wave $\psi$ in a warped product spacetime $\left(\overline{M},\overline{g}\right)$ of type $0$, we set\linebreak $\hat{\psi}(t,x)=\nicefrac{\psi(t,x)}{t^{1-\nicefrac2{\gamma}}}$. Then, for any $N\in\N$ and $0<t<t_0$, the following estimates hold for a $\g$-dependent constant $C>0$:
\begin{equation*}
t^\beta E_N\left(t,\hat{\psi}\right)\leq t_0^{\beta} E_N\left(t_0,\hat{\psi}\right)\,,\\
\end{equation*}
\vspace{-0.7cm}
\begin{align*}
\left\lvert\Lap^N\hat{\psi}(t,\cdot)\right\rvert&\leq \frac{Ct_0^{\frac{\beta}2}}{1-\frac{\beta}2}\left(t_0^{1-\frac{\beta}2}-t^{1-\frac{\beta}2}\right)\left(\sqrt{E_N\left(t_0,\hat{\psi}\right)}+\sqrt{E_{N+1}\left(t_0,\hat{\psi}\right)}\right)\\
&\quad +\left\lvert\Lap^N\hat{\psi}(t_0,\cdot)\right\rvert \numberthis\label{eq:pointwise-rescaled-flat}
\end{align*}
\end{prop}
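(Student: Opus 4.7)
The plan is to adapt the divergence-theorem strategy of Proposition \ref{prop:energy-1} and Corollary \ref{cor:pointwise-estimate} to the rescaled wave $\hat\psi$, the only new ingredient being the PDE satisfied by $\hat\psi$. Substituting $\psi = t^{1-\nicefrac{2}{\gamma}}\hat\psi$ into $\square_{\overline{g}}\psi = 0$ together with $a(t) = t^{\nicefrac{2}{3\gamma}}$, the terms that involve $\hat\psi$ without any derivative cancel because $\psi_{\text{hom}}(t) = t^{1-\nicefrac{2}{\gamma}}$ is itself a homogeneous wave by Remark \ref{rem:stationary-waves}. Dividing by $t^{1-\nicefrac{2}{\gamma}}$ yields
\begin{equation*}
\del_t^2\hat\psi + \frac{2 - 2/\gamma}{t}\,\del_t\hat\psi - t^{-\nicefrac{4}{3\gamma}}\Lap\hat\psi = 0,
\end{equation*}
and by Corollary \ref{cor:wave-formulas} the same equation is then satisfied by $\Lap^N\hat\psi$ for every $N\in\N$.

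Writing $\phi = \Lap^N\hat\psi$, I would differentiate $E_N(t,\hat\psi)$ directly in $t$. The $\del_t|\del_t\phi|^2$ contribution produces $2\del_t\phi\cdot\del_t^2\phi$, which I replace by the rescaled PDE, while differentiating the spatial term yields $-\frac{4}{3\gamma t}a(t)^{-2}|\nabbar\phi|_\g^2$ plus a mixed piece that, after spatial integration by parts on $\M$, equals $-2a(t)^{-2}\int_\M\del_t\phi\cdot\Lap\phi\,\vol{\M}$. The two occurrences of $\del_t\phi\cdot\Lap\phi$ cancel exactly because $a(t)^{-2} = t^{-\nicefrac{4}{3\gamma}}$, leaving the sign-definite identity
\begin{equation*}
\frac{d}{dt}E_N(t,\hat\psi) = -\frac{2(2-2/\gamma)}{t}\,\|\del_t\phi\|_{L^2(\M)}^2 - \frac{4}{3\gamma\,t}\,a(t)^{-2}\|\nabbar\phi\|_{L^2(\M)}^2.
\end{equation*}
The Leibniz rule then shows that $t^\beta E_N(t,\hat\psi)$ is nondecreasing in $t$ precisely when $\beta \geq \max\bigl(4-\nicefrac{4}{\gamma},\,\nicefrac{4}{3\gamma}\bigr)$, with the first constraint being vacuous for $\gamma \leq 1$; this is the $\beta$ appearing in the proposition, and integration in $t$ between $t$ and $t_0$ yields the energy estimate.

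The pointwise bound follows by essentially the same $L^2$-to-$L^\infty$ argument as in Corollary \ref{cor:pointwise-estimate}: estimate $|\Lap^N\hat\psi(t,x)|$ by $|\Lap^N\hat\psi(t_0,x)| + \int_t^{t_0}\|\del_t\Lap^N\hat\psi(s,\cdot)\|_{L^\infty(\M)}\,ds$, apply $L^2$-elliptic regularity for $\Lap$ on $(\M,\g)$ (just as in the cited corollary) to control the $L^\infty$ norm by $\sqrt{E_N(s,\hat\psi)} + \sqrt{E_{N+1}(s,\hat\psi)}$, and insert the monotonicity estimate above in the form $\sqrt{E_k(s,\hat\psi)} \leq (t_0/s)^{\beta/2}\sqrt{E_k(t_0,\hat\psi)}$ for $k\in\{N,N+1\}$. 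The remaining integral $\int_t^{t_0} s^{-\beta/2}\,ds = (t_0^{1-\beta/2} - t^{1-\beta/2})/(1-\beta/2)$ is finite and matches the stated bound because $\beta < 2$ for every $\gamma \in (\nicefrac{2}{3},2)$, as both $\nicefrac{4}{3\gamma}$ and $4-\nicefrac{4}{\gamma}$ lie strictly below $2$ on that open interval.

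The only genuinely nontrivial step is spotting that the $\del_t\phi\cdot\Lap\phi$ cross terms cancel and that the two remaining coefficients in $\tfrac{d}{dt}E_N$ can be dominated by a single polynomial weight $t^\beta$ uniformly in $\gamma$; the need to dominate the larger of the two forces the precise definition of $\beta$. The stiff case $\gamma = 2$ must be excluded here because $\beta = 2$ would cause the $s^{-\beta/2}$ integral to diverge logarithmically and the rescaled PDE has a qualitatively different leading homogeneous solution ($\log t$) in that regime, requiring a separate treatment.
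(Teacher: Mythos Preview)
Your proposal is correct and follows essentially the same approach as the paper: both compute the equation satisfied by $\hat\psi$, differentiate the energy directly, observe the cancellation of the cross term $\del_t\hat\psi\cdot\Lap\hat\psi$ after integration by parts, and arrive at the differential inequality $\del_tE_N(t,\hat\psi)\ge -\tfrac{\beta}{t}E_N(t,\hat\psi)$, from which the energy estimate follows (the paper invokes Gronwall, you phrase it equivalently as monotonicity of $t^\beta E_N$); the pointwise bound is then obtained identically via the elliptic $L^2$--$L^\infty$ step of Corollary~\ref{cor:pointwise-estimate}. Your opening sentence about adapting ``the divergence-theorem strategy'' of Proposition~\ref{prop:energy-1} is a slight misdescription---neither you nor the paper actually uses the energy-flux/divergence-theorem machinery here, just direct differentiation---but this does not affect the argument.
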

\begin{proof}
Again, it suffices to just prove the case $N=0$. First, one computes
\begin{align*}
\square_{\overline{g}}\hat{\psi}
=&-\frac2t\left(\frac2{\gamma}-1\right)\del_t\hat{\psi}\,.
\end{align*}
Now, one calculates: 
\begin{align*}
\del_t E\left(t,\hat{\psi}\right)=&\int_{\M}\left[2\del_t^2\hat{\psi}\cdot\del_t\hat{\psi}+2t^{-\frac{4}{3\gamma}}\cdot\g\left(\del_t\nabbar\hat{\psi},\nabbar\hat{\psi}\right)-\frac{4}{3\gamma}t^{-\frac{4}{3\gamma}-1}\left\lvert\nabbar\hat{\psi}\right\rvert_{\g}^2\right]\,\vol{\M}\\
=&\int_{\M}\left[2\left(t^{-\frac{4}{3\gamma}}\Lap\hat{\psi}+\frac{2}{t}\left(\frac1{\gamma}-1\right)\del_t\hat{\psi}\right)\del_t\hat{\psi}\right.\\
&\quad \left.-2t^{-\frac4{3\gamma}}\del_t\hat{\psi}\cdot\Lap\hat{\psi}-\frac{4}{3\gamma t}t^{-\frac{4}{3\gamma}}\left\lvert\nabbar\hat{\psi}\right\rvert_{\g}^2\right]\,\vol{\M}\\
=&\int_{\M}\left[\frac1t\left(\frac4{\gamma}-4\right)\left\lvert\del_t\hat{\psi}\right\rvert^2-\frac4{3\gamma t}t^{-\frac4{3\gamma}}\left\lvert\nabbar\hat{\psi}\right\rvert_{\g}^2\right]\vol{\M}\\
\geq& -\frac1t\max\left(4-\frac4{\gamma},\frac4{3\gamma}\right)\int_{\M}\left[\left\lvert\del_t\hat{\psi}\right\rvert^2+t^{-\frac4{3\gamma}}\left\lvert\nabbar\hat{\psi}(t,\cdot)\right\rvert_{\g}^2\right]\vol{\M}\\
=& -\frac\beta{t}E\left(t,\hat{\psi}\right)
\end{align*}
From here, we can deduce the first estimate with the Gronwall lemma. The pointwise estimate also follows analogously to Corollary \ref{cor:pointwise-estimate}, with
\begin{equation}\label{eq:leading-term-estimate}
\left\|\del_t\hat{\psi}(t,\cdot)\right\|_{L^\infty\left(\M\right)}\leq C\left(\frac{t_0}{t}\right)^{\frac{\beta}{2}}\left(\sqrt{E\left(t_0,\hat{\psi}\right)}+\sqrt{E_{1}\left(t_0,\hat{\psi}\right)}\right)
\end{equation}
for any $0<t<t_0,\,x\in\M$ (and similarly for $N>0$).
\end{proof}

\begin{remark}\label{rem:energy-2-consequences}
Note that one has
\[0<1-\frac{\beta}2=\begin{cases}
1-\frac2{3\gamma}& \frac2{3}<\gamma\leq\frac4{3}\\
\frac{2}{\gamma}-1 & \frac4{3}\leq\gamma<2
\end{cases},
\]
so the proof of Proposition \ref{prop:energy-2} also demonstrates that
\[t\mapsto\frac{\Lap^N\psi(t,x)}{t^{1-\frac2{\gamma}}}\] is absolutely continuous\footnote{Here and throughout the rest of the paper, a function $f:(a,b)\rightarrow\R$ is said to be absolutely continuous on $(a,b)$ iff there exists some $g\in L^1(a,b)$ such that $f(t)=f(b)-\int_t^bg(s)ds$ holds almost everywhere. In particular, $f^\prime=g$ almost everywhere and $f$ has a continuous representative that can be continuously extended to $[a,b]$.} on $[0,t_0]$ for any $x\in\M$ \textbf{and \boldmath{$\gamma<2$}}. 
\end{remark}

\begin{prop}\label{prop:energy-3}
Let $\psi$ be a smooth wave on a warped product spacetime $\left(\overline{M},\overline{g}\right)$ of type $-1$ with $\gamma\in(\nicefrac23,2)$. We define $\hat{\psi}(t,x):=\nicefrac{\psi(t,x)}{h(t)},$ $\ h(t)=\int_t^\infty a(s)^{-3}\,ds$. Then, for any $\epsilon>0$, there exists $t_0>0$ small enough such that, for
\[\beta_\epsilon=\max(6(\gamma-1)+\epsilon,2),\]
\[a(t)^{\beta_\epsilon}E\left(t,\hat{\psi}\right)\leq a(t_0)^{\beta_{\epsilon}}E\left(t_0,\hat{\psi}\right)\]
holds for any $0<t<t_0$. Additionally, for $\gamma=2$, the following estimate is satisfied for arbitrary $t_0>0$ and again any $0<t<t_0$:
\[a(t)^6E\left(t,\hat{\psi}\right)\leq a(t_0)^6E\left(t_0,\hat{\psi}\right)\]
\end{prop}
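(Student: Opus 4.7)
The strategy closely mirrors Proposition \ref{prop:energy-2}: I would first derive a wave-type equation for $\hat\psi$, differentiate the energy to obtain a differential identity, and then pick $\beta$ so that $\del_t(a^\beta E)$ has a favourable sign. The entry point is the observation that $h$ is itself a spatially homogeneous solution to the wave equation (Remark \ref{rem:stationary-waves}), so $\square_{\overline{g}} h = 0$. Applying the Leibniz-type identity $\square_{\overline{g}}(h\hat\psi) = h\,\square_{\overline{g}}\hat\psi + \hat\psi\,\square_{\overline{g}} h - 2\dot h\,\del_t \hat\psi$, which holds whenever $h$ depends only on $t$, together with $\dot h = -a^{-3}$ and $\square_{\overline{g}}\psi = 0$, yields
\[\square_{\overline{g}}\hat\psi \;=\; -\tfrac{2 a^{-3}}{h}\,\del_t\hat\psi\,.\]

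Differentiating $E(t,\hat\psi)$, integrating the cross term $\int_\M \g(\nabbar \del_t\hat\psi, \nabbar\hat\psi)\,\vol{\M}$ by parts on the closed manifold $\M$, and substituting the identity above for $\del_t^2\hat\psi - a^{-2}\Lap\hat\psi$, I expect to obtain
\[\del_t E(t,\hat\psi) \;=\; \bigl(-6\tfrac{\dot a}{a} + \tfrac{4 a^{-3}}{h}\bigr)\|\del_t\hat\psi\|_{L^2(\M)}^2 \;-\; 2\tfrac{\dot a}{a^3}\,\|\nabbar\hat\psi\|_{L^2(\M)}^2\,.\]
Multiplying by $a^\beta$ and absorbing $\beta a^{\beta-1}\dot a\, E$ then gives
\[\del_t\bigl(a^\beta E(t,\hat\psi)\bigr) \;=\; a^\beta\!\left\{\bigl[(\beta-6)\tfrac{\dot a}{a} + \tfrac{4 a^{-3}}{h}\bigr]\|\del_t\hat\psi\|^2 \;+\; (\beta-2)\tfrac{\dot a}{a^3}\|\nabbar\hat\psi\|^2\right\}\,.\]
Hence $a^\beta E$ is non-decreasing in $t$ as soon as $\beta \geq 2$ and $(\beta - 6)\dot a/a + 4a^{-3}/h \geq 0$ on $(0,t_0)$; integrating from $t$ to $t_0$ then yields the two claimed monotonicity estimates without any Gronwall step.

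The heart of the argument, and the main technical obstacle, is pinning down the behaviour of the dimensionless quantity $4/(h\,a^2\,\dot a)$ as $t\downarrow 0$, since this is what controls whether a possibly negative term $(\beta-6)\dot a/a$ can be absorbed by the positive term $4a^{-3}/h$. Combining \eqref{eq:Friedmann-1a} (which gives $\dot a \sim \sqrt{8\pi B/3}\,a^{1-3\gamma/2}$ near the singularity), the asymptotic $a(t) \simeq t^{2/(3\gamma)}$ from Lemma \ref{lem:scale-factor}, and the induced asymptotic $h(t) \simeq c\,t^{1-2/\gamma}$ for $\gamma<2$ obtained via l'H\^opital on the integral representation of $h$, all multiplicative constants cancel and one finds
\[\lim_{t \downarrow 0} \frac{4}{h(t)\,a(t)^2\,\dot a(t)} \;=\; 6(2-\gamma)\,.\]
For $\gamma<2$ and any $\epsilon>0$, one can therefore choose $t_0>0$ small enough that $4/(h a^2 \dot a) > 6(2-\gamma) - \epsilon$ on $(0,t_0)$; this is precisely the inequality $(\beta_\epsilon - 6)\dot a/a + 4 a^{-3}/h \geq 0$ for $\beta_\epsilon = \max(6(\gamma-1)+\epsilon,2)$, completing the argument. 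The stiff case $\gamma = 2$ simplifies considerably: with $\beta = 6$ the $\|\del_t\hat\psi\|^2$-coefficient reduces to $4 a^{-3}/h$, which is strictly positive for every $t>0$ without any smallness requirement on $t_0$, so the estimate holds on $(0,t_0)$ for arbitrary $t_0>0$. The asymptotic identification above is the delicate step; the exact value $6(2-\gamma)$ both pins the threshold $6(\gamma-1)$ in the definition of $\beta_\epsilon$ and ensures compatibility with the stiff result as $\gamma\to 2^-$.
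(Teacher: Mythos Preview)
Your proposal is correct and follows essentially the same approach as the paper's proof. The only cosmetic differences are: (i) the paper phrases the key asymptotic as $\lim_{t\to 0}\frac{\dot h/h}{\dot a/a}=\tfrac{3\gamma}{2}-3$ (computed via l'H\^opital and the Friedman equations) rather than your equivalent $\lim_{t\to 0}\frac{4}{h a^2\dot a}=6(2-\gamma)$ obtained from the asymptotics in Lemma~\ref{lem:scale-factor}; and (ii) the paper derives $\del_t E\geq -\beta_\epsilon\tfrac{\dot a}{a}E$ and invokes Gronwall, whereas you show $\del_t(a^{\beta_\epsilon}E)\geq 0$ directly---these are of course the same statement.
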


\begin{proof}
Once again, we straightforwardly calculate using $h=\int_t^\infty a(s)^{-3}\,ds,\,\dot{h}=-a^{-3}$:
\begin{align*}
\square_{\overline{g}}\hat{\psi}
=\,&2\frac{\dot{h}}{h}\del_t\hat{\psi}\numberthis\label{eq:rescaled-wave-op-hyp}
\end{align*}
In trying to analogize the proof of Proposition \ref{prop:energy-2} as much as possible, we will need to compare $\nicefrac{\dot{h}}{h}$ to $\nicefrac{\dot{a}}a$ for small times: We claim
\begin{equation}\label{eq:Hospital}
\lim_{t\to 0} \frac{\nicefrac{\dot{h}}h}{\nicefrac{\dot{a}}{a}}(t)=\frac{3\gamma}2-3
\end{equation}
for any $\gamma\in(\nicefrac23,2]$. First, we simplify the fraction:
\[\frac{\nicefrac{\dot{h}}h}{\nicefrac{\dot{a}}{a}}=\frac{-a^{-3}a}{\dot{a}h}=-\frac{\left(a^{2}\dot{a}\right)^{-1}}h\]
As $t\to 0$, the denominator diverges toward $\infty$ as shown in Lemma \ref{lem:scale-factor}. Regarding the numerator, the rephrased Friedman equation \eqref{eq:Friedmann-1a} with $\kappa=-1$ gives
\[a^{2}\dot{a}=a^2\sqrt{1+\frac{8\pi B}3a^{2-3\gamma}}=\sqrt{a^4+\frac{8\pi B}3a^{6-3\gamma}}\,.\]
With $a(0)=0$, this yields
\[\lim_{t\to 0}\left(a(t)^{2}\dot{a}(t)\right)^{-1}=\begin{cases}
\infty & \gamma<2\\
\sqrt{\frac3{8\pi B}} & \gamma=2
\end{cases}\,.\]
Thus, \eqref{eq:Hospital} already follows for $\gamma=2$. Else, we can apply the l'Hospital rule in step (A) to compute this limit (again recalling $\dot{h}=-a^{-3}$):
\begin{align*}
\lim_{t\to 0} \frac{\nicefrac{\dot{h}}h}{\nicefrac{\dot{a}}{a}}(t)=&-\lim_{t\to 0}\frac{a(t)^{-2}\dot{a}(t)^{-1}}{h(t)}\\
\equp{(A)}&-\lim_{t\to 0}\frac{-2a(t)^{-3}\dot{a}(t)\dot{a}(t)^{-1}-a(t)^{-2}\dot{a}(t)^{-2}\ddot{a}(t)}{-a(t)^{-3}}\\
=&-2-\lim_{t\to 0}\frac{a(t)\ddot{a}(t)}{\dot{a}(t)^2}\\
\equp{(B)}&-2-\lim_{t\to 0}\frac{a(t)\left(-\frac{4\pi}3(1+3(\gamma-1))\rho\right)}{1+\frac{8\pi B}3a(t)^{2-3\gamma}}\\
\equp{(C)}&-2-\lim_{t\to 0}\frac{a(t)^2\left(-\frac{4\pi}3(3\gamma-2)Ba(t)^{-3\gamma}\right)}{1+\frac{8\pi B}3a(t)^{2-3\gamma}}\\
=&-2+\lim_{t\to 0}\frac{\frac{4\pi B}{3}(3\gamma-2)}{a(t)^{3\gamma-2}+\frac{8\pi B}3}\\
=&-2+\frac12(3\gamma-2)=\frac{3\gamma}2-3
\end{align*}
We used the second Friedman equation \eqref{eq:Friedmann-2} with $p=(\gamma-1)\rho$ for (B) to substitute $\ddot{a}$ in the numerator, and \eqref{eq:Friedmann-1a} with $\kappa=-1$ to replace $\dot{a}$ in the denominator, as well as \eqref{eq:density-expansion-relation} to replace $\rho$ in (C). For the final limit, we recall that $3\gamma-2$ is positive for $\gamma>\nicefrac23$, that $a(0)=0$ holds and that $B$ is positive.\\

With this information in hand, we can now treat the energy as before: Using \eqref{eq:rescaled-wave-op-hyp} to replace $\del_t^2\hat{\psi}$, we calculate
\begin{align*}
\del_tE\left(t,\hat{\psi}\right)&=\int_{\M}\left(2\del_t^2\hat{\psi}\cdot\del_t\hat{\psi}-2\del_t\hat{\psi}\cdot a^{-2}\Lap\hat{\psi}-2\frac{\dot{a}}{a^3}\left\lvert\nabbar\hat{\psi}\right\rvert_{\g}^2\right)\,\vol{\M}\\
&=\int_{\M}\left(-\left(6\frac{\dot{a}}{a}+4\frac{\dot{h}}{h}\right)\left\lvert\del_t\hat{\psi}\right\rvert^2-2\frac{\dot{a}}{a}a^{-2}\left\lvert\nabbar\hat{\psi}\right\rvert_{\g}^2\right)\vol{\M}\\
&\geq-\max\left(6\frac{\dot{a}}{a}+4\frac{\dot{h}}{h},2\frac{\dot{a}}{a}\right)E(t,\hat{\psi})
\end{align*}
Now, it follows from \eqref{eq:Hospital} that, for any $\epsilon>0$, there exists some small enough $t_0>0$ such that, for all $0<t<t_0$,
$$\frac{\dot{h}(t)}{h(t)}\leq\left(\frac{3\gamma}2-3+\frac{\epsilon}4\right)\frac{\dot{a}(t)}{a(t)}$$
(since both $a$ and $\dot{a}$ are positive) and hence
\begin{align*}
\del_tE\left(t,\hat{\psi}\right)\geq&-\max\left(6+4\cdot\left(\frac{3\gamma}2-3+\frac{\epsilon}4\right),2\right)\frac{\dot{a}(t)}{a(t)}E\left(t,\hat{\psi}\right)\\
=&-\beta_\epsilon \frac{\dot{a}(t)}{a(t)}E\left(t,\hat{\psi}\right)\,.
\end{align*}
The stated energy estimate follows once again from a Gronwall argument. For $\gamma=2$, this works analogously, simply estimating
\[\del_tE(t,\psi)\geq -\max\left(6\frac{\dot{a}}{a}+4\frac{\dot{h}}{h},2\frac{\dot{a}}{a}\right)E(t,\hat{\psi})\geq -6\frac{\dot{a}}{a}E(t,\hat{\psi})\,,\]
since $\dot{h}=-a^{-3}<0$, $h>0$ and $\nicefrac{\dot{a}}{a}>0$, and then continuing as usual.
\end{proof}
In particular, we can derive the following pointwise estimate along the same lines as before:
\begin{corollary}\label{cor:rescaled-pointwise-est-hyp}
For $\left(\overline{M},\overline{g}\right)$, $\hat{\psi}$ and $\beta_\epsilon$ as in Proposition \ref{prop:energy-3} and $\nicefrac23<\gamma<2$, there exists $t_0>0$ small enough for any $\epsilon>0$ such that, for any $0<t<t_0$, the following pointwise estimate holds:
\begin{align*}
\left\lvert\Lap^N\hat{\psi}(t,\cdot)\right\rvert&\,\leq Ca(t_0)^{\frac{\beta_\epsilon}2}\left(\sqrt{E_N(t_0,\hat{\psi})}+\sqrt{E_{N+1}(t_0,\hat{\psi})}\right)\frac{t_0^{1-\nicefrac{\beta_\epsilon}{3\gamma}}-t^{1-\nicefrac{\beta_\epsilon}{3\gamma}}}{1-\nicefrac{\beta_\epsilon}{3\gamma}}\\
&\quad +\left\lvert\Lap^N\hat{\psi}(t_0,\cdot)\right\rvert
\end{align*}
For the stiff case ($\gamma=2$), one analogously obtains, again not requiring $t_0>0$ to be small here,
\begin{align*}
\left\lvert\Lap^N\hat{\psi}(t,\cdot)\right\rvert&\,\leq Ca(t_0)^3\left(\sqrt{E_N\left(t_0,\hat{\psi}\right)}+\sqrt{E_{N+1}\left(t_0,\hat{\psi}\right)}\right)\left(\log(t_0)-\log(t)\right)\\
&\quad + \left\lvert\Lap^N\hat{\psi}(t_0,\cdot)\right\rvert
\end{align*}
\end{corollary}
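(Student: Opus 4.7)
The plan is to mirror the argument used for Corollary \ref{cor:pointwise-estimate} and the pointwise bound in Proposition \ref{prop:energy-2}, replacing the explicit power-law scale factor by the asymptotic behaviour obtained in Lemma \ref{lem:scale-factor} and the energy control by Proposition \ref{prop:energy-3}.

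First, I would fix $x\in\M$, $0<t<t_0$ and write via the fundamental theorem of calculus
\[
\left\lvert\Lap^N\hat{\psi}(t,x)\right\rvert\leq \left\lvert\Lap^N\hat{\psi}(t_0,x)\right\rvert+\int_t^{t_0}\left\|\del_t\Lap^N\hat{\psi}(s,\cdot)\right\|_{L^\infty(\M)}\,ds.
\]
Next, exactly as in the proof of Corollary \ref{cor:pointwise-estimate}, I would pass from $L^\infty$ to $L^2$ using ellipticity of $\Lap$ on $(\M,\g)$ and the standard second order elliptic $L^2$ estimate in \cite[p.~463, Theorem~27]{Besse08}, obtaining a constant $C=C(\g)>0$ with
\[
\left\|\del_t\Lap^N\hat{\psi}(s,\cdot)\right\|_{L^\infty(\M)}\leq C\bigl(\sqrt{E_N(s,\hat{\psi})}+\sqrt{E_{N+1}(s,\hat{\psi})}\bigr).
\]

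Then I would apply Proposition \ref{prop:energy-3} (with the chosen $\epsilon>0$ and $t_0$ small enough) to both energy terms, yielding
\[
\sqrt{E_N(s,\hat{\psi})}+\sqrt{E_{N+1}(s,\hat{\psi})}\leq \left(\frac{a(t_0)}{a(s)}\right)^{\beta_\epsilon/2}\bigl(\sqrt{E_N(t_0,\hat{\psi})}+\sqrt{E_{N+1}(t_0,\hat{\psi})}\bigr)
\]
for $0<s<t_0$. Substituting into the integral reduces everything to estimating $\int_t^{t_0}a(s)^{-\beta_\epsilon/2}\,ds$.

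For the last step I would invoke part (4) of Lemma \ref{lem:scale-factor}, which gives $a(s)\simeq s^{2/(3\gamma)}$ uniformly on $(0,t_0]$ for $t_0>0$ small enough (shrinking $t_0$ if necessary, with the resulting comparison constant absorbed into $C=C(\g,t_0,\rho(t_0))$). In the non-stiff regime $\gamma<2$, the verification that $1-\beta_\epsilon/(3\gamma)>0$ is the only subtle point: when $\beta_\epsilon=2$ one has $1-2/(3\gamma)>0$ as $\gamma>\nicefrac23$, and when $\beta_\epsilon=6(\gamma-1)+\epsilon$ one has $1-\beta_\epsilon/(3\gamma)=2/\gamma-1-\epsilon/(3\gamma)>0$ for $\gamma<2$ provided $\epsilon$ is chosen small enough, so the elementary integral
\[
\int_t^{t_0} s^{-\beta_\epsilon/(3\gamma)}\,ds=\frac{t_0^{1-\beta_\epsilon/(3\gamma)}-t^{1-\beta_\epsilon/(3\gamma)}}{1-\beta_\epsilon/(3\gamma)}
\]
produces precisely the stated bound. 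For $\gamma=2$, Proposition \ref{prop:energy-3} already provides the estimate with $\beta_\epsilon$ replaced by $6$ for any $t_0>0$, and $a(s)^{-3}$ integrates to $\log(t_0)-\log(t)$ up to a multiplicative constant by part (6) of Lemma \ref{lem:scale-factor}, yielding the stiff estimate without any additional smallness on $t_0$.

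The only real obstacle is bookkeeping: making sure that the constant $C$ absorbs the asymptotic comparison constants from Lemma \ref{lem:scale-factor}, and checking that the exponent $1-\beta_\epsilon/(3\gamma)$ stays strictly positive for both regimes of $\beta_\epsilon$ throughout $\gamma\in(\nicefrac23,2)$; all computations beyond this are routine and closely parallel those of Corollary \ref{cor:pointwise-estimate}.
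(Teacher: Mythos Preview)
Your proposal is correct and follows exactly the approach the paper indicates (``along the same lines as before'', i.e.\ as in Corollary~\ref{cor:pointwise-estimate} and the pointwise part of Proposition~\ref{prop:energy-2}). One minor remark: the positivity of $1-\beta_\epsilon/(3\gamma)$ is not actually needed for the integral formula in the corollary---it holds for any $\epsilon>0$ regardless of the sign of that exponent---and is instead the content of Remark~\ref{rem:energy-3-consequences}; similarly, for $\gamma=2$ and arbitrary $t_0$ you should note that $s\,a(s)^{-3}$ extends continuously to $(0,t_0]$ rather than invoking Lemma~\ref{lem:scale-factor}(6), which is stated only for small $t_0$.
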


\begin{remark}\label{rem:energy-3-consequences}

Again, we turn to the question of whether the rescaled wave is absolutely continuous toward the Big Bang, which will help us answer whether we can extend it to the Big Bang hypersurface: If $\beta_\epsilon=2$, one has
\[1-\frac{\beta_\epsilon}{3\gamma}=1-\frac2{3\gamma}>0,\]
and else
\[1-\frac{\beta_\epsilon}{3\gamma}=1-\left(2\frac{\gamma-1}{\gamma}+\frac{\epsilon}{3\gamma}\right)=\frac2{\gamma}-1-\frac{\epsilon}{3\gamma}\]
is positive for small enough $\epsilon>0$ since $\frac2{\gamma}-1>0$ for $\gamma<2$. Hence, the proof once again even shows that $\hat{\psi}$ is absolutely continuous close to $t=0$, so $\displaystyle\lim_{t\to 0}\hat{\psi}(t,x)$ exists for any $x\in\M$,  \textbf{excluding the stiff case}.\\
Furthermore, it should be noted that this does not work for the stiff case since the upper estimate just obtained still diverges toward $\infty$ logarithmically when approaching $t=0$.
\end{remark}

\section{Global blow-up of waves}\label{sec:blow-up}

In this section, we will provide the proof of the main Theorem \ref{thm:main}, first treating the case $\gamma<2$ and proving the convergence in high regularity in type $0$, and then quickly arguing why type $-1$ follows completely analogously. Afterwards, we will turn to the stiff fluid case, going through both types there -- while the proof applied there is in principle also applicable to the previous setting, it only yields the asymptotic profile without additional strength of convergence and is thus treated separately.

\begin{proof}[Proof of Theorem \ref{thm:main} for $\gamma<2$]

First, let's turn to type $0$: Since, by Remark \ref{rem:energy-2-consequences},
\[t\in(0,t_0]\mapsto\frac{\Lap^N\psi(t,x)}{t^{1-\frac2{\gamma}}}\]
is absolutely continuous for any fixed $x\in\M$, with a time derivative that is integrable on $[0,t_0]$, $A_N(x):=\displaystyle\lim_{t\to 0}\frac{\Lap^N\psi(t,x)}{t^{1-\frac2{\gamma}}}$ exists for any $N\in\N$, $x\in\M$. \\

The argument for smoothness now works as follows: With the dominated convergence theorem, we can show that $A_N$ is in $L^2(\M)$ for any $N\in\N$. By choosing a sequence of smooth functions on $\M$ that approximates $A_N$ in $L^2$ and whose Laplacians approximate $A_{N+1}$ in $L^2$, it follows that $A_N$ is even in $H^2(\M)$, using ellipticity of $\Lap$, and thus continuous. Finally, we iterate this type of argument over $C^{2k}(\M)$ for $k\in\N$ to achieve arbitrarily high regularity, in particular for $A_0=A$.\\

To this end, we use the following notation: Choose an arbitrary decreasing sequence $\left(t_n\right)_{n\in\N}$ with $0<t_n\leq t_0$ for all $n\in\N$ and $t_n\rightarrow 0$ as $n\to\infty$. Further, define 
\[f_{N,n}(x):=\Lap^N\hat{\psi}(t_n,x)=\frac{\Lap^N\psi(t_n,x)}{t_n^{1-\frac2{\gamma}}}\,,\]
so $\left(f_{N,n}\right)_{n\in\N}$ converges to $A_N$ pointwise for any $N\in\N$. These sequences are \textit{consistent} in the sense that $\Lap f_{N,n}= f_{N+1,n}$ holds for all $n,N\in\N$.\\

\noindent By Lemma \ref{prop:energy-2}, $(t,x)\mapsto\left\lvert\frac{\Lap^N\psi(t,x)}{t^{1-\frac{2}{\gamma}}}\right\rvert^2$ is uniformly bounded on $[0,t_0]\times\M$ for any $N\in\N$. Since $\M$ is of finite volume, we can thus use the Dominated Convergence Theorem for $t$ approaching $0$ to deduce that $\left(f_{N,n}\right)_{n\in\N}$ converges to $A_N$ in $L^2\left(\M\right)$ for any $N\in\N$ as $n\to\infty$. By the consistency property $\Lap f_{N,n}=f_{N+1,n}$, it follows that this sequence must be a Cauchy sequence with regards to 
\[\|\Lap(\cdot)\|_{L^2\left(\M\right)}+\|\cdot\|_{L^2\left(\M\right)},\]
so also with regards to $\|\cdot\|_{H^2\left(\M\right)}$ by ellipticity. Thus, $(f_{N,n})_{n\in\N}$ converges in $H^2\left(\M\right)$, and this limit must obviously agree with $A_N$ almost everywhere, so $A_N\in H^2\left(\M\right)$. Furthermore, by the consistency property and uniqueness of weak derivatives, $\Lap A_N$ and $A_{N+1}$ must represent the same element of $L^2\left(\M\right)$.\\

Now, since we chose $(f_{N,n})_{n\in\N}$ to be consistent, it follows that $f_{N,n}$ and $\Lap f_{N,n}=f_{N,n+1}$ are Cauchy in $H^2(\M)$, so $\left(f_{N,n}\right)_{n\in\N}$ is a Cauchy sequence with regards to the norm 
\[\|\Lap(\cdot)\|_{H^2\left(\M\right)}+\|\cdot\|_{H^2\left(\M\right)}\]
for any $N\in\N$. By the standard Sobolev embedding $H^2(\M)\hookrightarrow C(\M)$, it is then also a Cauchy sequence with regards to
\[\|\Lap(\cdot)\|_{C\left(\M\right)}+\|\cdot\|_{C\left(\M\right)}\]
for any $N\in\N$, and thus a Cauchy sequence in $C^2\left(\M\right)$ by ellipticity of $\Lap$ (again see \cite[p.~463, Theorem 27]{Besse08}). Since the latter is a Banach space and any limit in $C^2\left(\M\right)$ must coincide with the pointwise limit, it follows that $A_N\in C^2\left(\M\right)$ must hold for any $N\in\N$. As $\Lap A_N=A_{N+1}$ holds in $L^2\left(\M\right)$, it must now also hold classically. Again using ellipticity, it now follows by the same approximation argument that $A_N\in C^4\left(\M\right)$ is satisfied for any $N$, and by iterating this argument that $A_N\in C^\infty\left(\M\right)$ must hold for any $N\in\N$. In particular, this shows that $A$ is smooth, $\Lap^NA_0=A_N$ and that $\Lap^N\hat{\psi}(t,\cdot)$ converges to $A_N$ in $C^{2k}(\M)$ for any $N,k\in\N$.\\

For type $-1$, Remark \ref{rem:energy-3-consequences} yields existence of $A_N$ along the same lines and fulfills the role of Remark \ref{rem:energy-2-consequences} in the rest of the proof as well. Besides replacing $t^{1-\nicefrac2{\gamma}}$ by $\int_t^\infty a(s)^{-3}\,ds$, everything else now follows identically since no (other) properties of the scale factor were used at any point.
\end{proof}

\begin{remark}
For $\gamma=2$, this argument fails in the first step since we do not have Remarks \ref{rem:energy-2-consequences} and \ref{rem:energy-3-consequences} at our disposal to even establish existence of $A$. Thus, we take a different route: By rearranging and integrating the wave equation along the same lines as for homogeneous waves in Remark \ref{rem:stationary-waves}, we see that $\psi$ takes precisely the desired form up to error terms that are either constant (and hence negligible compared to the divergent leading order) or an integral dependent on $a$ and $\Lap\psi$. Using the pointwise estimates from Section \ref{subsec:energy-estimates} to control $\Lap\psi$, we then show even this term to be bounded. In theory, we could have also used this strategy for $\gamma<2$, but we chose the strategy above since it essentially only relies on energy estimates and less on the structure of the wave equation itself that becomes more complicated in the full Einstein system.
\end{remark}
\begin{proof}[Proof of Theorem \ref{thm:main} for $\gamma=2$]
From the re-arranged wave equation in Corollary \autoref{cor:wave-formulas}, we have (since $a(t)>0$ is satisfied for all $t>0$)
\begin{equation}\label{eq:wave-rearrange}
\del_t\left(a^3\dot{\psi}\right)=a\Lap\psi\,.
\end{equation}
By integration, we obtain
\[\dot{\psi}(t,x)={a(t_0)^3}\dot{\psi}(t_0,x){a(t)^{-3}}-a(t)^{-3}\int_t^{t_0}a(s)\Lap\psi(s,x)\,ds\]
for some $t_0>0$.
Set $L=t_0$ for type $0$ and $L=\infty$ for type $-1$. Then, again by integration and re-arranging (at first only formally), one obtains
\begin{align*}
\psi(t,x)=\,&\psi(t_0,x)-a(t_0)^3\del_t\psi(t_0,x)\int_t^{t_0}a(s)^{-3}\,ds\\
&+\int_t^{t_0}a(s)^{-3}\left(\int_s^{t_0}a(r)\Lap\psi(r,x)\,dr\right)\,ds\\
=\,&\left(\int_t^La(s)^{-3}\,ds\right)\left(-a(t_0)^3\del_t\psi(t_0,x)+\int_0^{t_0}a(r)\Lap\psi(r,x)\,dr\right)\\
&-\left(\int_{t_0}^L a(s)^{-3}\,ds\right)\left(-a(t_0)^3\del_t\psi(t_0,x)+\int_0^{t_0}a(r)\Lap\psi(r,x)\,dr\right)+\psi(t_0,x)\\
& -\int_t^{t_0}\int_0^sa(s)^{-3}a(r)\Lap\psi(r,x)\,dr\,ds\,.\numberthis\label{eq:intermediate-stiff-asymp}
\end{align*}
Of course, this rearrangement is only allowed if $r\mapsto a(r)\Lap\psi(r,x)$ is integrable on $(0,t_0]$, which we will now verify: By Corollary \ref{cor:pointwise-estimate} for $N=1$, one knows that
\[\lvert\Lap\psi(r,x)\rvert\leq\lvert\Lap\psi(t_0,x)\rvert+Ca(t_0)^3\left(\int_r^{t_0}a(s)^{-3}\,ds\right)\left(\sqrt{E_1(t_0,\psi)}+\sqrt{E_2(t_0,\psi)}\right)\]
is satisfied for some $\g$-dependent constant $C$ (which may be suitably updated from line to line). For the sake of this argument, this information is simplified by working with the estimate
\[\lvert\Lap\psi(r,x)\rvert\leq C\left(1+\int_r^{t_0}a(s)^{-3}\,ds\right)\,.\]
By Lemma \ref{lem:scale-factor} with $\gamma=2$, one has $a(t)\simeq{t^\frac13}$ and hence $\int_t^{t_0}a(s)^{-3}ds=\O{\lvert\log(t)\rvert}$ for type $-1$ as $t\to 0$, and in type $0$ one even has $a(t)=t^{\frac13}$ for all $t>0$. Hence, one obtains the following (for w.l.o.g. small enough $t_0>0$ in type $-1$):
\begin{align*}
\int_s^{t_0}\left\lvert a(r)\Lap\psi(r,x)\right.\!&\left.\!\!\right\rvert dr\leq C \int_s^{t_0} r^{\frac13}(1+\lvert\log(r)\rvert)\,dr\\
&\leq C\left[\frac34\left(t_0^{\frac43}-s^{\frac43}+t_0^{\frac43}\lvert\log(t_0)\rvert+s^{\frac43}\lvert\log(s)\rvert\right)+\int_s^{t_0}\frac34r^{\frac13}\,dr\right]\\
&=C\left[\frac34\left(t_0^{\frac43}(1+\lvert\log(t_0)\rvert)+s^{\frac43}(-1+\lvert\log(s)\rvert)\right)+\frac9{16}\left(t_0^{\frac43}-s^{\frac43}\right)\right]
\end{align*}
As $s$ approaches 0, this remains bounded since $s^\alpha\lvert\log(s)\rvert\to 0$ as $s\to 0$ for any $\alpha>0$, so all our above calculations were justified. (Note that, for type $-1$, $L=\infty$ is allowed by Lemma \ref{lem:scale-factor}.)

First, we now finish type $-1$: As already implied by \eqref{eq:intermediate-stiff-asymp}, we set $A$ and $r$ as follows:
\begin{align*}
A(x)&:=-a(t_0)^3\del_t\psi(t_0,x)+\int_0^{t_0}a(r)\Lap\psi(r,x)\,dr\\
r(t,x)&:=\psi(t_0,x)-\left(\int_{t_0}^L a(s)^{-3}\,ds\right)\left(-a(t_0)^3\del_t\psi(t_0,x)+\int_0^{t_0}a(q)\Lap\psi(q,x)\,dq\right)\\
&\ -\int_t^{t_0}\int_0^sa(s)^{-3}a(q)\Lap\psi(q,x)\,dq\,ds
\end{align*}
Since $\psi$ and $a$ are smooth, so are $A$ and $r$. To prove the statement, it only needs to be shown that $r$ is bounded. Obviously, this only needs to be verified for the only non-constant term in the second line. We check, along similar lines to before, w.l.o.g. for $t_0>0$ small enough:
\begin{align*}
\left\lvert\int_t^{t_0}a(s)^{-3}\int_0^{s}a(r)\Lap\psi(q,x)\,dq\,ds\right\rvert
&\leq C\int_t^{t_0}\frac1s\int_0^{s}q^{\frac13}\left(1+\lvert\log(q)\rvert\right)\,dq\,ds\\
&= C\int_t^{t_0}\frac1s\left[\frac32s^{\frac43}+\frac9{16}s^{\frac43}+0\right]\,ds\\
&\leq C\left(t_0^{\frac43}-t^{\frac43}\right)
\end{align*}
Thus, $r$ remains bounded as $t\to 0$, in particular $r(t,x)=o(\lvert\log(t)\rvert)$ as $t\to 0$ and the asymptotic profile follows.\\
For type $0$, note that since $L=t_0$, the first summand in the second line of \eqref{eq:intermediate-stiff-asymp} vanishes, and one has
\[\int_t^L a(s)^{-3}\,ds=\int_t^{t_0}\frac1s\,ds=\log(t_0)-\log(t)\,.\]
Thus, \eqref{eq:intermediate-stiff-asymp} becomes
\begin{align*}
\psi(t,x)=\,&-\log(t)\left(-a(t_0)^3\del_t\psi(t_0,x)+\int_0^{t_0}a(r)\Lap\psi(r,x)\,dr\right)\\
&+\log(t_0)\left(-a(t_0)^3\del_t\psi(t_0,x)+\int_0^{t_0}a(r)\Lap\psi(r,x)\,dr\right)+\psi(t_0,x)\\
&\ -\int_t^{t_0}\int_0^sa(s)^{-3}a(r)\Lap\psi(r,x)\,dr\,ds
\end{align*}
and we analogously set
\begin{align*}
A(x)&:=a(t_0)^3\del_t\psi(t_0,x)-\int_0^{t_0}\int_s^{t_0}a(r)\Lap\psi(r,x)\,dr\,ds\,,\\
r(t,x)&:=\psi(t_0,x)+\log(t_0)\left(-a(t_0)^3\del_t\psi(t_0,x)+\int_0^{t_0}\int_s^{t_0}a(q)\Lap\psi(q,x)\,dq\,ds\right)\\
&\ -\int_t^{t_0}\int_0^sa(s)^{-3}a(q)\Lap\psi(q,x)\,dq\,ds\,.
\end{align*}
The argument now follows identically since the only term that is not obviously of order $o(\lvert\log(t)\rvert)$ approaching $0$ is the same one as in type $-1$, where all terms also have the same asymptotic behaviour.   
\end{proof}

\section{\todo{Sufficient conditions for highest order blow-up}}

\label{subsec:suff-cond}

In this final section, we will establish open conditions \todo{that ensure that  $A$ does not vanish pointwise, as well as some weaker conditions for global and pointwise blow-up that only work outside of the stiff case}. As indicated in the introduction, these essentially require the initial data to be velocity term dominated or close to that of a homogenous wave, respectively. For why \todo{the improved versions} fail in the stiff case, we point to Remark \ref{rem:failure}.\\

\subsection{\todo{A condition for pointwise blow-up}}

\begin{theorem}\label{thm:pointwise-blowup-stiff-incl}
\todo{Let $\psi_{\text{hom}}(x)=\mathcal{A}\cdot\int_t^{t_0}a(s)^{-3}\,ds$ be a homogeneous wave on a warped product spacetime of type $0$ or $-1$. Then, if $\psi$ satisfies the initial data assumption
\begin{align*}
\left\|a(t_0)^3\del_t\psi(t_0,\cdot)-\mathcal{A}\right\|_{C^0(M)}+\|\Lap\psi(t_0,\cdot)\|_{C^0(M)}\int_0^{t_0}a(s)ds&\\
+Ca(t_0)^3\left(\sqrt{E_1(t_0,\psi)}+\sqrt{E_2(t_0,\psi)}\right)\int_0^{t_0}\int_s^{t_0}a(r)^{-3}\,dr\,ds&<\mathcal{A}
\end{align*}
for $C>0$ as in Corollary \ref{cor:pointwise-estimate}, $A$ is nonvanishing.}
\end{theorem}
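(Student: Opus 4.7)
The plan is to extract an explicit pointwise formula for the blow-up profile $A$ by integrating the wave equation twice, exactly as in the $\gamma=2$ part of the proof of Theorem~\ref{thm:main} (an argument that actually works for every $\gamma\in(\nicefrac23,2]$), and then to apply the reverse triangle inequality together with the pointwise control on $\Lap\psi$ from Corollary~\ref{cor:pointwise-estimate} in order to conclude $A(x)\neq 0$ for every $x$.

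Starting from the rearrangement $\del_t(a^3\dot\psi)=a\Lap\psi$ recorded in Corollary~\ref{cor:wave-formulas}, a first integration from $t$ to $t_0$ gives
\[
\dot\psi(t,x)=a(t)^{-3}\Bigl[a(t_0)^3\del_t\psi(t_0,x)-\int_t^{t_0}a(r)\Lap\psi(r,x)\,dr\Bigr],
\]
and a second integration, combined with the split $\int_s^{t_0}=\int_0^{t_0}-\int_0^s$ in the resulting double integral, yields the decomposition
\[
\psi(t,x)=\Bigl(\int_t^{t_0}a(s)^{-3}\,ds\Bigr)\,\tilde A(x)+\rho(t,x),\qquad \tilde A(x):=-a(t_0)^3\del_t\psi(t_0,x)+\int_0^{t_0}a(r)\Lap\psi(r,x)\,dr,
\]
with $\rho(t,x)$ bounded as $t\downarrow 0$. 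Absolute convergence of the integral defining $\tilde A(x)$ and boundedness of $\rho$ follow, for all admissible $\gamma$, from inserting the Corollary~\ref{cor:pointwise-estimate} pointwise bound on $\Lap\psi$ and using the scale-factor asymptotics of Lemma~\ref{lem:scale-factor}, exactly as in the stiff-case argument of Theorem~\ref{thm:main}. Since $\psi_{\text{hom}}(t)=\mathcal{A}\int_t^{t_0}a(s)^{-3}\,ds\to\infty$ as $t\downarrow 0$, the uniqueness clause of Theorem~\ref{thm:main} forces $A(x)=\tilde A(x)/\mathcal{A}$, so it is equivalent to prove $\tilde A(x)\neq 0$ pointwise.

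For that, I would rewrite $-a(t_0)^3\del_t\psi(t_0,x)=-\mathcal{A}-\bigl(a(t_0)^3\del_t\psi(t_0,x)-\mathcal{A}\bigr)$ and apply the reverse triangle inequality to obtain
\[
|\tilde A(x)|\ge\mathcal{A}-\|a(t_0)^3\del_t\psi(t_0,\cdot)-\mathcal{A}\|_{C^0(\M)}-\int_0^{t_0}a(r)\,|\Lap\psi(r,x)|\,dr.
\]
Into the last integral I plug the Corollary~\ref{cor:pointwise-estimate} bound
\[
|\Lap\psi(r,x)|\le\|\Lap\psi(t_0,\cdot)\|_{C^0(\M)}+Ca(t_0)^3\bigl(\sqrt{E_1(t_0,\psi)}+\sqrt{E_2(t_0,\psi)}\bigr)\int_r^{t_0}a(s)^{-3}\,ds,
\]
and swap the order of integration in the second contribution; this reproduces exactly the two remaining terms on the left-hand side of the standing hypothesis. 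The assumed strict inequality then gives $|\tilde A(x)|>0$ uniformly in $x\in\M$, which is the claim.

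The only mildly delicate point is uniform integrability of $r\mapsto a(r)\Lap\psi(r,x)$ on $(0,t_0]$ and the accompanying Fubini interchange across the full range $\gamma\in(\nicefrac23,2]$, but both follow immediately from the pointwise bound on $\Lap\psi$ and the asymptotics $a(t)\simeq t^{\nicefrac{2}{3\gamma}}$ near $0$ recorded in Lemma~\ref{lem:scale-factor}, requiring no input beyond what was already used in Theorem~\ref{thm:main}.
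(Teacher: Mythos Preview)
Your argument is correct and follows essentially the same route as the paper. The only organisational difference is that the paper works directly with $\psi-\psi_{\text{hom}}$ and integrates \eqref{eq:wave-rearrange} once, using the already-established limit $a(t)^3\del_t(\psi-\psi_{\text{hom}})(t,x)\to A(x)-\mathcal{A}$, whereas you integrate twice to re-derive the explicit profile formula from the $\gamma=2$ proof of Theorem~\ref{thm:main} and then match it against $A$ via uniqueness; after that point both proofs coincide (reverse triangle inequality plus Corollary~\ref{cor:pointwise-estimate}). One cosmetic remark: no genuine Fubini swap is needed in your final step --- after inserting the Corollary~\ref{cor:pointwise-estimate} bound, a mere relabelling $r\leftrightarrow s$ already produces the double integral $\int_0^{t_0}a(s)\int_s^{t_0}a(r)^{-3}\,dr\,ds$ appearing in the hypothesis.
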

\begin{proof}
\todo{We again use \eqref{eq:wave-rearrange} as a starting point, but this time for the smooth wave $\psi-\psi_{\text{hom}}$. Since $a(t)^3\del_t(\psi-\psi_{\text{hom}})(t,x)\rightarrow A(x)-\mathcal{A}$ holds pointwise for any $x\in M$ as $t\to 0$, we have
\begin{equation*}
\left\lvert a(t_0)^3\del_t(\psi-\psi_{\text{hom}})(t_0,x)-(A(x)-\mathcal{A})\right\rvert\leq \int_0^{t_0}a(s)\lvert\Lap(\psi-\psi_{\text{hom}})(s,x)\rvert\,ds\,.
\end{equation*}
Using Corollary \ref{cor:pointwise-estimate} to estimate integrand uniformly in $x$ and rearranging the inequality, we get
\begin{align*}
\lvert A(x)\rvert\geq&\,\lvert\mathcal{A}\rvert-\left\lvert a(t_0)^3\del_t(\psi-\psi_{\text{hom}})(t_0,x)\right\rvert\\
&\,-\left\lvert a(t_0)^3\del_t(\psi-\psi_{\text{hom}})(t_0,x)-(A(x)-\mathcal{A})\right\rvert\\
&\,-\|\Lap(\psi-\psi_{\text{hom}})(t_0,\cdot)\|_{C^0(M)}\int_0^{t_0}a(s)\,ds\\
&\,-Ca(t_0)^3\left(\sqrt{E_1(t_0,\psi-\psi_{\text{hom}})}+\sqrt{E_2(t_0,\psi-\psi_{\text{hom}})}\right)\cdot\\
&\quad\quad\quad\cdot\int_0^{t_0}a(s)\int_s^{t_0}a(r)^{-3}\,dr\,ds
\end{align*}
and the statement now follows upon realising that $a(t_0)^3\del_t\psi_{\text{hom}}(t_0,\cdot)=\mathcal{A}$ holds by definition and that $\psi_{\text{hom}}$ drops out in all subsequent terms precisely because it is spatially homogenous.}
\end{proof}

\begin{corollary}\label{cor:pointwise-blowup-stiff-incl}
\todo{If, for $t_0>0$ small enough in type $-1$, the initial data is controlled in the sense that, for some $\epsilon>0$ such that, 
\begin{equation*}
\|a(t_0)^3\del_t\psi(t_0,\cdot)-\mathcal{A}\|_{C^0(M)}\lesssim\epsilon
\end{equation*}
and
\begin{equation}\label{eq:pw-inprovement-cond}
g(t_0)\left(\|\del_t\Lap(\psi)(t_0,\cdot)\|_{H^2(M)}+\|\Lap\psi\|_{H^3(M)}\right)+t_0^{1+\frac2{3\gamma}}\|\Lap\psi\|_{C^0(M)}\lesssim\epsilon
\end{equation}
hold, where
\begin{equation*}
g(t_0)=\begin{cases}
\left(\left(2-\frac4{3\gamma}\right)^{-1}+\left(1+\frac2{3\gamma}\right)^{-1}\right)t_0^{2-\frac4{3\gamma}} & \gamma<2\\[1em]
t_0^\frac73\left(1+\lvert\log(t_0)\rvert\right) & \gamma=2\,,
\end{cases}
\end{equation*}
then $\lvert A\rvert>0$ holds if $\nicefrac{\epsilon}{\lvert\mathcal{A}\rvert}>0$ is small enough.}
\end{corollary}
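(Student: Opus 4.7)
The plan is to verify that the two displayed hypotheses of the corollary imply the single inequality in Theorem \ref{thm:pointwise-blowup-stiff-incl}, then invoke that theorem to conclude $|A(x)|>0$. The LHS of the theorem's condition splits into three summands, which I will bound one at a time; the strict inequality with RHS $|\mathcal{A}|$ then follows by taking $\epsilon/|\mathcal{A}|$ sufficiently small.

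For the first summand $\|a(t_0)^3\del_t\psi(t_0,\cdot)-\mathcal{A}\|_{C^0(M)}$ the first hypothesis of the corollary gives directly $\lesssim\epsilon$. For the second summand $\|\Lap\psi(t_0,\cdot)\|_{C^0(M)}\int_0^{t_0}a(s)\,ds$, the bound $\int_0^{t_0}a(s)\,ds\lesssim t_0^{1+2/(3\gamma)}$ holds because $a(s)=s^{2/(3\gamma)}$ in type $0$ and, for $t_0$ small, $a(s)\simeq s^{2/(3\gamma)}$ in type $-1$ by Lemma \ref{lem:scale-factor}; this reproduces the $t_0^{1+2/(3\gamma)}\|\Lap\psi\|_{C^0(M)}$ term in \eqref{eq:pw-inprovement-cond}.

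The third summand $Ca(t_0)^3\bigl(\sqrt{E_1(t_0,\psi)}+\sqrt{E_2(t_0,\psi)}\bigr)I(t_0)$, with $I(t_0):=\int_0^{t_0}a(s)\int_s^{t_0}a(r)^{-3}\,dr\,ds$, is the substantive step and splits into an energy estimate and an integral evaluation. Starting from $\sqrt{E_N(t_0,\psi)}\leq\|\del_t\Lap^N\psi(t_0,\cdot)\|_{L^2(M)}+a(t_0)^{-1}\|\nabbar\Lap^N\psi(t_0,\cdot)\|_{L^2(M)}$ and invoking elliptic regularity of $\Lap$ on $(\M,\g)$, I would derive $\|\nabbar\Lap^2\psi\|_{L^2}\lesssim\|\Lap\psi\|_{H^3(M)}$, $\|\del_t\Lap^2\psi\|_{L^2}=\|\Lap(\del_t\Lap\psi)\|_{L^2}\lesssim\|\del_t\Lap\psi\|_{H^2(M)}$, and the analogous weaker bounds for $N=1$. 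Combined with $a(t_0)\leq 1$ for small $t_0$, this yields $a(t_0)^3(\sqrt{E_1}+\sqrt{E_2})\lesssim a(t_0)^2\bigl(\|\del_t\Lap\psi\|_{H^2}+\|\Lap\psi\|_{H^3}\bigr)$. In parallel, Fubini rewrites $I(t_0)=\int_0^{t_0}a(r)^{-3}\int_0^r a(s)\,ds\,dr$, which is elementary to evaluate: for $\gamma<2$ one obtains the closed form $t_0^{2-4/(3\gamma)}\bigl[(2-4/(3\gamma))(1+2/(3\gamma))\bigr]^{-1}$, while for $\gamma=2$ the factor $a(r)^{-3}\simeq r^{-1}$ produces a logarithmic contribution after integration by parts, giving a bound of order $t_0^{4/3}(1+|\log t_0|)$. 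Multiplying the energy bound and the integral estimate reproduces the stated $g(t_0)$ up to a multiplicative constant.

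Spatial homogeneity of $\psi_{\text{hom}}$ ensures $\Lap\psi_{\text{hom}}=0$ and that $a(t_0)^3\del_t\psi_{\text{hom}}(t_0)$ equals $\mathcal{A}$ (up to sign), so passing from the $\psi-\psi_{\text{hom}}$ quantities appearing in the proof of Theorem \ref{thm:pointwise-blowup-stiff-incl} to the stated hypotheses in $\psi$ with $\mathcal{A}$ subtracted costs nothing. Summing the three bounds shows the theorem's condition holds with RHS $\lesssim\epsilon$, hence strictly less than $|\mathcal{A}|$ for $\epsilon/|\mathcal{A}|$ small. The main obstacle will be matching the precise coefficient $\bigl((2-\frac{4}{3\gamma})^{-1}+(1+\frac{2}{3\gamma})^{-1}\bigr)$, accommodating the $|\log t_0|$ factor cleanly in the stiff case, and in type $-1$ controlling the error introduced by replacing the exact scale factor with its asymptotic form uniformly for $t_0$ small enough.
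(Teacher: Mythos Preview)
Your proposal is correct and follows essentially the same approach as the paper: both reduce to verifying the single inequality of Theorem~\ref{thm:pointwise-blowup-stiff-incl} by bounding its three summands separately via the asymptotic $a(s)\simeq s^{2/(3\gamma)}$ from Lemma~\ref{lem:scale-factor} and by controlling $E_1,E_2$ through the stated Sobolev norms. The only notable difference is that you evaluate the double integral $a(t_0)^3\int_0^{t_0}a(s)\int_s^{t_0}a(r)^{-3}\,dr\,ds$ after a Fubini swap (integrating in $s$ first), whereas the paper computes the inner $r$-integral directly; this produces a product-of-reciprocals constant rather than the paper's sum, but since only the power of $t_0$ matters for the $\lesssim\epsilon$ conclusion this is immaterial, and your anticipated obstacle of matching the exact coefficient is therefore not a real one.
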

\begin{proof}
\todo{To illustrate why this translates to our simplified statement, we look at type $-1$ first and take $t_0$ to be small enough so that we can estimate $a(t)\leq C_a\cdot t^{\frac2{3\gamma}}$. We then see
\begin{equation*}
0\leq\int_0^{t_0}a(s)\,ds\leq C_a \int_0^{t_0}s^{\frac2{3\gamma}}\,ds= C_a\left(1+\frac2{3\gamma}\right)^{-1}t_0^{1+\frac2{3\gamma}}
\end{equation*}
and, for $\gamma<2$
\begin{align*}
0\leq a(t_0)^3\int_0^{t_0}a(s)\int_s^{t_0}a(r)^{-3}\,dr\,ds\leq&\,C_at_0^{\frac2{3\gamma}}\int_0^{t_0}s^{\frac2{3\gamma}}\left(s^{1-\frac2{\gamma}}-t_0^{1-\frac2{\gamma}}\right)\,ds\\
&\,=C_a\left(\left(2-\frac4{3\gamma}\right)^{-1}+\left(1+\frac2{3\gamma}\right)^{-1}\right)t_0^{2-\frac4{3\gamma}},
\end{align*}
while we analogously get for $\gamma=2$
\begin{equation*}
0\leq a(t_0)^3\int_0^{t_0}a(s)\int_s^{t_0}a(r)^{-3}\,dr\,ds\leq C_at_0^\frac73\left(\frac9{16}+\frac14\lvert\log(t_0)\rvert\right)
\end{equation*}
For type $0$, all of these calculations hold with $C_a$ equal to 1. Further, we observe that the energies occuring in Theorem \ref{thm:pointwise-blowup-stiff-incl} can be controlled by the Sobolev norms scaled by $g$ in \eqref{eq:pw-inprovement-cond}. Altogether, in light of our initial data assumption, it is now sufficient to satisfy
\[\epsilon<K\lvert\mathcal{A}\rvert\]
for some $K>0$ that is independent of $\epsilon$ and $\mathcal{A}$, and thus choosing $\nicefrac{\epsilon}{\lvert\mathcal{A}\rvert}$ to be small enough is sufficient.}
\end{proof}

\subsection{Improved conditions outside of the stiff case}
\todo{The fundamental difference between warped products associated to $\gamma=2$ and to $\gamma<2$ is that we can improve the convergence result in Theorem \ref{thm:main} for $\gamma<2$ to convergence within our energies:}

\begin{prop}\label{thm:energy-convergence}
In type $-1$ warped products with $\gamma<2$, the following holds denoting $h(t)=\int_t^{\infty}a(s)^{-3}\,ds$:
\[a(t)^6E_N(t,{\psi}-Ah)\rightarrow 0\ \text{as}\ t\to 0\]
For type $0$ warped products with $\gamma<2$, one analogously has
\[\lim_{t\to 0}t^{\frac4{\gamma}}E_N\left(t,\psi(t,\cdot)-A\cdot t^{1-\frac2{\gamma}}\right)=0.\]
\end{prop}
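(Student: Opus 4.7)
The plan is to reduce the convergence of $a^6 E_N(t, \psi - Ah)$ (respectively $t^{\nicefrac{4}{\gamma}} E_N(t, \psi - A \cdot t^{1-\nicefrac{2}{\gamma}})$ in type $0$) to the $C^\infty$-convergence $\hat{\psi} := \nicefrac{\psi}{\psi_{\text{hom}}} \to A$ established in Theorem \ref{thm:main}, combined with the rescaled energy bounds from Propositions \ref{prop:energy-2} and \ref{prop:energy-3}. Note that $\tilde{\psi} := \psi - A\psi_{\text{hom}}$ is \emph{not} a wave (since $A$ is not spatially constant, $\square_{\overline{g}}(A\psi_{\text{hom}}) = a^{-2}\psi_{\text{hom}}\Lap A \not\equiv 0$), so Proposition \ref{prop:energy-1} cannot be applied to $\tilde{\psi}$ directly. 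Because $\Lap^N \psi$ is however a smooth wave with asymptotic profile $\Lap^N A$ by Corollary \ref{cor:wave-formulas} and the proof of Theorem \ref{thm:main}, it suffices to handle $N=0$ and then apply the same argument level by level.

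Writing $\tilde{\psi} = \psi_{\text{hom}}(\hat{\psi} - A)$, a direct differentiation gives
\begin{align*}
a^6 |\del_t \tilde{\psi}|^2 &= c_h^2 (\hat{\psi} - A)^2 + 2 c_h\, a^3 \psi_{\text{hom}} (\hat{\psi} - A) \del_t \hat{\psi} + a^6 \psi_{\text{hom}}^2 |\del_t \hat{\psi}|^2,\\
a^4 |\nabbar \tilde{\psi}|_{\g}^2 &= a^4 \psi_{\text{hom}}^2 |\nabbar (\hat{\psi} - A)|_{\g}^2,
\end{align*}
where the constant $c_h := a^3 \dot{\psi}_{\text{hom}}$ equals $-1$ in type $-1$ (since $\dot{h} = -a^{-3}$) and $1 - \nicefrac{2}{\gamma}$ in type $0$. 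Integrating over $\M$ decomposes $a^6 E(t, \tilde{\psi})$ into four terms. The algebraic term $c_h^2 \|\hat{\psi} - A\|_{L^2(\M)}^2$ vanishes as $t \to 0$ immediately by uniform convergence of $\hat{\psi}$ to $A$ on the compact manifold $\M$. For the remaining three terms, the plan is to estimate $\|\del_t \hat{\psi}\|_{L^2}^2 \leq E(t, \hat{\psi}) \leq C a(t)^{-\beta_\epsilon}$ (type $-1$) or $\leq C t^{-\beta}$ (type $0$) via Propositions \ref{prop:energy-3} respectively \ref{prop:energy-2}, while controlling the spatial factors $\|\hat{\psi} - A\|_{L^2}, \|\nabbar(\hat{\psi} - A)\|_{L^2}$ by their vanishing limits using $C^\infty$-convergence.

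Combining with $a(t) \simeq t^{2/(3\gamma)}$ from Lemma \ref{lem:scale-factor} and $\psi_{\text{hom}} \simeq t^{1-\nicefrac{2}{\gamma}}$ from Remark \ref{rem:stationary-waves}, the three remaining prefactors scale as $a^4 \psi_{\text{hom}}^2 \simeq t^{2 - \nicefrac{4}{3\gamma}}$ (which vanishes for $\gamma > \nicefrac{2}{3}$), $a^{6 - \beta_\epsilon} \psi_{\text{hom}}^2$, and $a^{3 - \beta_\epsilon/2}\psi_{\text{hom}}$. The main technical obstacle will be to verify that the latter two decay with strictly positive power of $t$, by splitting into the regimes $\gamma \leq \nicefrac{4}{3}$ (where $\beta_\epsilon = 2$) and $\gamma > \nicefrac{4}{3}$ (where $\beta_\epsilon = 6(\gamma - 1) + \epsilon$). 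A short bookkeeping reduces positivity of the exponent $\nicefrac{2(3\gamma - \beta_\epsilon)}{3\gamma}$ in both regimes to the inequality $\gamma < 2$, modulo choosing $\epsilon > 0$ small enough in the latter case --- precisely the hypothesis of the proposition, and exactly the failure mode already flagged for $\gamma = 2$ in Remark \ref{rem:energy-3-consequences}. Type $0$ is handled identically with $\psi_{\text{hom}} = t^{1 - \nicefrac{2}{\gamma}}$ and $\beta = \max(\nicefrac{4}{3\gamma}, 4 - \nicefrac{4}{\gamma})$ in place of $\beta_\epsilon$, and the extension to arbitrary $N$ is immediate from the observation that $\Lap^N \tilde{\psi} = \Lap^N \psi - \psi_{\text{hom}} \Lap^N A$ has the same structure as the $N = 0$ case.
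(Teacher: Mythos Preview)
Your proposal is correct and follows essentially the same route as the paper's proof: both write $\psi-A\psi_{\text{hom}}=\psi_{\text{hom}}(\hat\psi-A)$, expand the energy, bound the time-derivative contributions via the rescaled energy estimate $E(t,\hat\psi)\lesssim a^{-\beta_\epsilon}$ (resp.\ $t^{-\beta}$), and verify the remaining exponents are strictly positive precisely when $\gamma<2$. The only cosmetic differences are that the paper applies $(x+y)^2\le 2x^2+2y^2$ immediately (dropping the cross term) and splits $|\nabbar(\hat\psi-A)|_g^2\le 2|\nabbar\hat\psi|_g^2+2|\nabbar A|_g^2$ so that the gradient piece is absorbed into $E(t,\hat\psi)$, whereas you keep the cross term and the full $\nabbar(\hat\psi-A)$ and appeal to the $C^\infty$-convergence of $\hat\psi$ to $A$ from Theorem~\ref{thm:main}; neither choice affects the argument.
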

\begin{proof}
We only prove the former estimate, since the proof of the latter is analogous and simpler. One calculates:
\begin{align*}
&a(t)^6E\left(t,\psi-Ah\right)\\
=\,&a(t)^6\int_{\M}\left[\left\lvert h(t)\del_t\hat{\psi}(t,\cdot)-a(t)^{-3}\hat{\psi}(t,\cdot)+a(t)^{-3}A\right\rvert^2\right.+\\
&\left.\phantom{\left\lvert h(t)\del_t\hat{\psi}(t,\cdot)-\frac{a(t)^{-3}}{h(t)}\psi(t,\cdot)+a(t)^{-3}A\right\rvert^2}+a(t)^{-2}h(t)^2\left\lvert\nabbar\hat{\psi}(t,\cdot)-\nabbar A\right\rvert_{\g}^2\right]\vol{\M}\\
\leq\,&2a(t)^6\int_{\M}\left[h(t)^2\left\lvert\del_t\hat{\psi}(t,\cdot)\right\rvert^2+a(t)^{-6}\left\lvert\hat{\psi}(t,\cdot)-A\right\rvert^2+\right.\\
&\left.\phantom{h(t)^2\left\lvert\del_t\hat{\psi}(t,\cdot)\right\rvert^2+a(t)^{-6}\left\lvert\hat{\psi}(t,\cdot)-A\right\rvert^2}+a(t)^{-2}h(t)^2\left(\left\lvert\nabbar\hat{\psi}(t,\cdot)\right\rvert_{\g}^2+\left\lvert\nabbar A\right\rvert_{\g}^2\right)\right]\vol{\M}\\
\leq\,&2a(t)^6h(t)^2E\left(t,\hat{\psi}\right)+2\int_{\M}\left\lvert\hat{\psi}(t,\cdot)-A\right\rvert^2\vol{\M}+2{h(t)^2}{a(t)^4}\int_{\M}\left\lvert\nabbar A\right\rvert_{\g}^2\vol{\M} \numberthis\label{eq:energy-convergence-all-terms}
\end{align*}
Now, we analyse all three terms as $t\to 0$:\\
Regarding the first term, we have shown in Lemma \ref{lem:scale-factor} that $a(t)=\O{t^{\nicefrac2{3\gamma}}}$ and $h(t)=\O{t^{1-\nicefrac2{\gamma}}}$. Thus, $a(t)^6h(t)^2=\O{t^2}$. On the other hand, combining Proposition \ref{prop:energy-3} and again Lemma \ref{lem:scale-factor} yields for arbitrarily small $\epsilon>0$ as long $t_0>t>0$ small enough:
\[E\left(t,\hat{\psi}\right)\leq E\left(t_0,\hat{\psi}\right)\left(\frac{a(t_0)}{a(t)}\right)^{\beta_{\epsilon}}\leq E\left(t_0,\hat{\psi}\right)a(t_0)^{\beta_{\epsilon}}\cdot Ct^{-\nicefrac{2\beta_\epsilon}{3\gamma}}\]
If $\beta_{\epsilon}=2$, one has $-\nicefrac{2\beta_{\epsilon}}{3\gamma}=-\nicefrac{4}{3\gamma}>-2$. Else, one has
\[-\frac{2\beta_{\epsilon}}{3\gamma}=-\frac2{3\gamma}\left(6(\gamma-1)+\epsilon\right)=\frac4{\gamma}-2-\frac{\epsilon}{3\gamma}\]
For $0<\epsilon<3\gamma\left(\nicefrac4\gamma-2\right)=12-6\gamma$, one can ensure that this is positive (recalling $\gamma<2$). Hence, one deduces that $E(t,\psi)=\O{t^{-2+\delta}}$ holds for some $\delta>0$ in any case and thus the first summand vanishes.\\
The second term simply vanishes by the Dominated Convergence Theorem.\\
Regarding the final term, one has by Lemma \ref{lem:scale-factor} that
\[{h(t)^2}{a(t)^4}=\O{t^{2-\frac4{\gamma}+\frac8{3\gamma}}}=\O{t^{2-\frac{4}{3\gamma}}},\]
so this factor converges to $0$ as $t\to 0$ since $\gamma>\nicefrac23$. Since $A$ is smooth, the integral is finite and this term as a whole converges to $0$.\\
Altogether, the entire right hand side of \eqref{eq:energy-convergence-all-terms} now vanishes in the limit, proving the statement.
\end{proof}

Before establishing our blow-up conditions, we quickly collect the following lemma:

\begin{lemma}\label{lem:H1-est}
For any smooth wave $\psi$ on a warped product spacetime as in Proposition \ref{prop:energy-1} and any $0<t<t_0$, the following holds:
\begin{align*}
\sqrt{\int_{\M}\left\lvert\nabbar\psi(t,\cdot)\right\rvert^2_{\g}\vol{\M}}&\,\leq\sqrt{2}\sqrt{E(t_0,\psi)+E_1(t_0,\psi)}\int_t^{t_0}\frac{a(t_0)^3}{a(s)^{3}}\,ds\\
&\quad + \sqrt{\int_{\M}\left\lvert\nabbar\psi(t_0,\cdot)\right\rvert^2_{\g}\vol{\M}}
\end{align*}
\end{lemma}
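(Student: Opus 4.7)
The plan is to mirror the strategy of Corollary \ref{cor:pointwise-estimate}, but to work with $\|\nabbar(\cdot)\|_{L^2(\M)}$ rather than a pointwise bound on $\Lap^N\psi$. Since $\nabbar$ acts only in the spatial variables and $\del_s$ only in time, the two commute on the product $\overline{M}=I\times\M$. By the fundamental theorem of calculus and Minkowski's integral inequality in $L^2(\M)$, I would first obtain
\[
\sqrt{\int_{\M}\left\lvert\nabbar\psi(t,\cdot)\right\rvert_{\g}^2\vol{\M}}\;\leq\;\sqrt{\int_{\M}\left\lvert\nabbar\psi(t_0,\cdot)\right\rvert_{\g}^2\vol{\M}}\;+\;\int_t^{t_0}\left\|\nabbar\del_s\psi(s,\cdot)\right\|_{L^2(\M)}\,ds,
\]
reducing the problem to a pointwise-in-$s$ bound on $\left\|\nabbar\del_s\psi(s,\cdot)\right\|_{L^2(\M)}$ by the initial-time energies.

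For that integrand, since $\M$ is closed, integration by parts gives
\[
\int_{\M}\left\lvert\nabbar\del_s\psi(s,\cdot)\right\rvert_{\g}^2\vol{\M}\;=\;-\int_{\M}\del_s\psi(s,\cdot)\,\Lap\del_s\psi(s,\cdot)\,\vol{\M},
\]
and Cauchy--Schwarz together with Young's inequality $2ab\leq a^2+b^2$ bound this by a constant times $\|\del_s\psi(s,\cdot)\|_{L^2}^2+\|\Lap\del_s\psi(s,\cdot)\|_{L^2}^2$. Both summands are directly controlled by the energies defined in \eqref{eq:energy-def-1}--\eqref{eq:energy-def-2}, using that $\Lap$ and $\del_s$ commute (Corollary \ref{cor:wave-formulas}), so that $\|\del_s\psi\|_{L^2}^2\leq E(s,\psi)$ and $\|\Lap\del_s\psi\|_{L^2}^2=\|\del_s\Lap\psi\|_{L^2}^2\leq E_1(s,\psi)$. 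Consequently $\left\|\nabbar\del_s\psi(s,\cdot)\right\|_{L^2(\M)}$ is bounded by a constant times $\sqrt{E(s,\psi)+E_1(s,\psi)}$.

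Finally, applying Proposition \ref{prop:energy-1} at orders $0$ and $1$ transfers the time dependence to the factor $a(t_0)^3/a(s)^3$, yielding
\[
\left\|\nabbar\del_s\psi(s,\cdot)\right\|_{L^2(\M)}\;\leq\;C\,\frac{a(t_0)^3}{a(s)^3}\sqrt{E(t_0,\psi)+E_1(t_0,\psi)}
\]
for some universal constant $C$. Substituting this into the Minkowski bound from the first paragraph and integrating over $s\in(t,t_0)$ produces the claimed inequality; the explicit constant $\sqrt{2}$ appears as a convenient (slightly loose) upper bound on whatever constant comes out of the chain of Young-type inequalities above. No significant obstacle is expected: the only real input is that the higher-order energy $E_1$ already encodes control of $\Lap\del_s\psi$, which combined with integration by parts on the closed manifold $\M$ gives exactly what is needed to promote the pointwise (in time) energy estimate to the spatial $H^1$-type estimate asserted.
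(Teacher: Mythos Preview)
Your proposal is correct and follows essentially the same approach as the paper: the paper differentiates $F(s)=\|\nabbar\psi(s,\cdot)\|_{L^2(\M)}$ and bounds $-\del_sF$ via Cauchy--Schwarz by $\|\nabbar\del_s\psi\|_{L^2(\M)}$, which is exactly your Minkowski/FTC step in disguise, and then both arguments estimate $\|\nabbar\del_s\psi\|_{L^2(\M)}^2$ through integration by parts and Young's inequality by $\tfrac12(E(s,\psi)+E_1(s,\psi))$ before invoking Proposition~\ref{prop:energy-1}. Your remark that $\sqrt{2}$ is a loose constant is also accurate---the chain of inequalities in fact yields $1/\sqrt{2}$.
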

\begin{proof}
For the sake of convenience, we denote $F(t,\psi):=\sqrt{\displaystyle\int_{\M}\left\lvert\nabbar\psi(t,\cdot)\right\rvert^2_{\g}\vol{\M}}$. One calculates for $0<s<t_0$:
\begin{align*}
-\frac12\left(\del_t\left(F(\cdot,\psi)^2\right)\right)(s)=\,&\int_{\M}-\g\left(\nabbar\psi(s,\cdot),\del_t\nabbar\psi(s,\cdot)\right)\,\vol{\M}\\
\leq\,&\sqrt{\int_{\M}\left\lvert\nabbar\psi(s,\cdot)\right\rvert_{\g}^2\,\vol{\M}}\sqrt{\int_{\M}\left\lvert\nabbar\del_t\psi(s,\cdot)\right\rvert_{\g}^2\,\vol{\M}}\\
\leq\,& F(s,\psi)\sqrt{\frac12\int_{\M}\left\lvert\del_t\psi(s,\cdot)\right\rvert^2+\left\lvert\del_t\Lap\psi(s,\cdot)\right\rvert^2\,\vol{\M}}\\
\leq\,& F(s,\psi)\sqrt{\frac{E(s,\psi)+E_1(s,\psi)}2}\\
\leq\,&F(s,\psi)\sqrt{\frac{E(t_0,\psi)+E_1(t_0,\psi)}2}\frac{a(t_0)^3}{a(s)^3}
\end{align*}
On the other hand, one has $\frac12\left(\del_t\left(F(\cdot,\psi)^2\right)\right)(s)=F(s,\psi)\cdot\del_tF(s,\psi)$. Hence,
\[-\del_t F(s,\psi)\leq \sqrt{2}\sqrt{E(t_0,\psi)+E_1(t_0,\psi)}\frac{a(t_0)^3}{a(s)^3}\]
and thus the statement follows from integration on $s\in[t,t_0]$.
\end{proof}

\begin{lemma}\label{lem:limit-equality} For type $0$ warped product spacetimes with $\gamma<2$, one has
\[\lim_{t\downarrow 0}a(t)^6E(t,\psi)=\lim_{t\downarrow 0}t^{\frac4{\gamma}}E(t,\psi)=\left(1-\frac2{\gamma}\right)^2\int_{\M}\lvert A\rvert^2\,\vol{\M}\,.\]
For type $-1$ with $\gamma<2$, the following holds:
\[\lim_{t\to 0}a(t)^6E(t,\psi)=\int_{\M}\lvert A\rvert^2\vol{\M}\]
\end{lemma}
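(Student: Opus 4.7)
The plan is to reduce both assertions to a direct computation on the homogeneous wave by invoking Proposition \ref{thm:energy-convergence} together with the reverse triangle inequality in the Hilbert seminorm $\sqrt{E(t,\cdot)}$. Concretely, for any two smooth functions $\phi_1,\phi_2$ on $\overline{M}$ one has
\[
\bigl\lvert\sqrt{E(t,\phi_1)}-\sqrt{E(t,\phi_2)}\bigr\rvert\leq\sqrt{E(t,\phi_1-\phi_2)}\,,
\]
so multiplying through by $a(t)^3$ (resp.~$t^{2/\gamma}$) gives
\[
\Bigl\lvert\sqrt{a(t)^6E(t,\psi)}-\sqrt{a(t)^6E(t,A\psi_{\text{hom}})}\Bigr\rvert\leq\sqrt{a(t)^6E(t,\psi-A\psi_{\text{hom}})}\,,
\]
and the right-hand side tends to $0$ as $t\downarrow 0$ by Proposition \ref{thm:energy-convergence}. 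Thus it suffices to show that $a(t)^6E(t,A\psi_{\text{hom}})$ (resp.~$t^{4/\gamma}E(t,A\psi_{\text{hom}})$) converges to the claimed limit, after which squaring recovers the statement.

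For type $-1$, with $\psi_{\text{hom}}(t)=h(t)=\int_t^\infty a(s)^{-3}\,ds$ and $\dot{h}=-a^{-3}$, spatial homogeneity of $h$ gives
\[
E(t,Ah)=\lvert\dot{h}(t)\rvert^2\int_{\M}\lvert A\rvert^2\vol{\M}+a(t)^{-2}h(t)^2\int_{\M}\lvert\nabbar A\rvert_{\g}^2\vol{\M}\,,
\]
so
\[
a(t)^6E(t,Ah)=\int_{\M}\lvert A\rvert^2\vol{\M}+a(t)^4h(t)^2\int_{\M}\lvert\nabbar A\rvert_{\g}^2\vol{\M}\,.
\]
By Lemma \ref{lem:scale-factor} one has $a(t)^4h(t)^2=\O{t^{2-\nicefrac4{3\gamma}}}$, which vanishes as $t\downarrow 0$ since $\gamma>\nicefrac23$, so the correction term drops out.

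For type $0$, use $a(t)=t^{\nicefrac2{3\gamma}}$ so that $a(t)^6=t^{\nicefrac4\gamma}$ (this already identifies the two limits), and compute with $\psi_{\text{hom}}(t)=t^{1-\nicefrac2\gamma}$ and $\dot{\psi}_{\text{hom}}(t)=(1-\nicefrac2\gamma)t^{-\nicefrac2\gamma}$ that
\[
t^{\nicefrac4\gamma}E\!\left(t,A\cdot t^{1-\nicefrac2\gamma}\right)=\left(1-\tfrac2\gamma\right)^2\!\int_{\M}\lvert A\rvert^2\vol{\M}+t^{2-\nicefrac4{3\gamma}}\!\int_{\M}\lvert\nabbar A\rvert_{\g}^2\vol{\M}\,,
\]
where again $2-\nicefrac4{3\gamma}>0$ thanks to $\gamma>\nicefrac23$, so the gradient term vanishes. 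The only mild subtlety is bookkeeping the scaling factors so that the $\dot{\psi}_{\text{hom}}$ contribution produces the prefactor $(1-\nicefrac2\gamma)^2$ in type $0$ and $1$ in type $-1$, which is exactly what the pointwise computation of $|\dot{\psi}_{\text{hom}}|^2$ yields after multiplying by $a(t)^6$. No step here is delicate: the only real input is Proposition \ref{thm:energy-convergence}, and everything else is explicit asymptotics from Lemma \ref{lem:scale-factor}.
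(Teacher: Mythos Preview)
Your argument is correct and in fact cleaner than the paper's. The paper proceeds by expanding the quadratic form directly: writing $\partial_t\psi=\partial_t(\psi-Ah)-a^{-3}A$ and $\nabla\psi=\nabla(\psi-Ah)+h\nabla A$, it obtains
\[
a(t)^6E(t,\psi)=a(t)^6E(t,\psi-Ah)-2a(t)^3\!\int_{\M}\!A\,\partial_t(\psi-Ah)\,\vol{\M}+\int_{\M}\lvert A\rvert^2\vol{\M}+\text{(gradient terms)},
\]
and then handles the cross term via Cauchy--Schwarz against $\sqrt{a(t)^6E(t,\psi-Ah)}$, again invoking Proposition~\ref{thm:energy-convergence}. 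Your use of the reverse triangle inequality for the Hilbert seminorm $\sqrt{E(t,\cdot)}$ absorbs that cross-term bookkeeping into a single line and reduces the problem immediately to computing $a(t)^6E(t,A\psi_{\text{hom}})$, which you then treat exactly as the paper treats its residual gradient term. Both routes rest on the same two inputs---Proposition~\ref{thm:energy-convergence} and the asymptotics $a(t)^4h(t)^2=\O{t^{2-4/(3\gamma)}}$ from Lemma~\ref{lem:scale-factor}---so neither is deeper, but yours is the more economical packaging.
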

\begin{proof}
As earlier, we only prove the type $-1$ case, denote $h(t)=\int_t^{\infty}a(s)^{-3}\,ds$ and then calculate:
\begin{align*}
a(t)^6E(t,\psi)=\,&a(t)^6\int_{\M}\left[\left\lvert\del_t\left(\psi-Ah\right)-a(t)^{-3}\cdot A\right\rvert^2+a(t)^{-2}\left\lvert\nabbar\left(\psi-Ah\right)\right\rvert^2_{\g}\right.\\
&\quad\quad\quad\quad\quad\quad\left. +2a(t)^{-2}h(t)\g\left(\nabbar\psi,\nabbar A\right)-a(t)^{-2}h(t)^2\left\lvert\nabbar A\right\rvert^2_{\g}\right]\vol{\M}\\
=\,&a(t)^6E(t,\psi-Ah)-2a(t)^3\int_{\M}A\cdot\del_t\left(\psi-Ah\right)\,\vol{\M}+\int_{\M}\lvert A\rvert^2\,\vol{\M}\\
&\quad -a(t)^4h(t)^2\int_{\M} \left[\frac{\psi}{h}\cdot\Lap A+\left\lvert\nabbar A\right\rvert^2_{\g}\right]\vol{\M} \numberthis \label{eq:limit-equality-terms-listed}
\end{align*}
The first term vanishes by Proposition \ref{thm:energy-convergence}, and so does the second one since
\[a(t)^3\left\lvert\int_{\M}A\cdot\del_t(\psi-Ah)\vol{\M}\right\rvert\leq\|A\|_{L^2\left(\M\right)}\sqrt{a(t)^6E(t,\psi-Ah)}\longrightarrow 0.\]
Regarding the final term, $\nicefrac\psi{h}=\hat{\psi}$ converges to $A$ pointwise by definition and is uniformly bounded by Remark \ref{rem:energy-3-consequences}, so the integral remains finite in the limit by the Dominated Convergence Theorem (it even vanishes after integration by parts). Furthermore, by Lemma \ref{lem:scale-factor}, as $t$ approaches $0$, $a(t)^4=\O{t^{\frac8{3\gamma}}}$ and $h(t)^2=\O{t^{2-\frac4{\gamma}}}$. Hence, the prefactor asymptotically behaves like $t^{2-\frac4{3\gamma}}$ and in particular converges to zero, so the entire summand does as well. Since all terms beside $\|A\|_{L^2\left(\M\right)}^2$ now vanish in the limit, the statement follows.
\end{proof}

With these lemmata now in hand, we can use the previous energy estimates to construct sufficient conditions that $A$ does not vanish entirely. For the sake of simplicity, we first start out with type $0$ and adjust the statement and proof for type $-1$ afterward.

\begin{theorem}\label{thm:Global-Blowup}
Suppose that, over a type $0$ warped product spacetime $\left(\overline{M},\overline{g}\right)$ associated with $\gamma<2$, for sufficiently small $t_0>0$, $\del_t\psi(t_0,\cdot)$ is not identically zero and there exists some $\epsilon\in(0,1)$  such that
\begin{equation}\label{eq:global-blowup-req1}
\epsilon\left[1-Gt_0^{2-\frac{4}{3\gamma}}\right]\int_{\M}\left\lvert\del_t\psi(t_0,\cdot)\right\rvert^2\,\vol{\M}>Gt_0^{2-\frac4{3\gamma}}\int_{\M}\left\lvert\del_t\Lap\psi(t_0,\cdot)\right\rvert^2\,\vol{\M}
\end{equation}
and
\begin{align*}
(1-\epsilon)&\left[1-Gt_0^{2-\frac{4}{3\gamma}}\right]a(t_0)^2\int_{\M}\left\lvert\del_t\psi(t_0,\cdot)\right\rvert^{2}\,\vol{\M}> \\
&>\left(1+Gt_0^{2-\frac4{3\gamma}}\right)\int_{\M}\left\lvert\nabbar\psi(t_0,\cdot)\right\rvert_{\g}^2\,\vol{\M}+Gt_0^{2-\frac4{3\gamma}}\int_{\M}\left\lvert\nabbar\Lap\psi(t_0,\cdot)\right\rvert^2_{\g}\,\vol{\M} \numberthis \label{eq:global-blowup-req2}
\end{align*}
hold, where $G:=\frac{32}{3\gamma\left(1-\frac2{\gamma}\right)^2}\left(\frac{3\gamma}8-\frac2{1+\frac2{3\gamma}}+\frac1{2-\frac4{3\gamma}}\right)=\frac{4}{1-\left(\frac2{3\gamma}\right)^2}>0$. Then $\|A\|_{L^2\left(\M\right)}>0$.
\end{theorem}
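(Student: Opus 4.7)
The plan is to reduce, via Lemma \ref{lem:limit-equality}, to a strict positivity statement for $\lim_{t\downarrow 0}a(t)^{6}E(t,\psi)$, and to attain this by combining the flux identity underlying Proposition \ref{prop:energy-1} with the $H^{1}$-type bound of Lemma \ref{lem:H1-est}. Since Lemma \ref{lem:limit-equality} yields $\lim_{t\to 0}a(t)^{6}E(t,\psi)=(1-\tfrac{2}{\gamma})^{2}\|A\|_{L^{2}(\M)}^{2}$, and since \eqref{eq:key-divergence-theorem} combined with \eqref{eq:divergence-energy-flux} and $\vol{\M_s}=a(s)^{3}\vol{\M}$ rewrites as
\[a(t)^{6}E(t,\psi)=a(t_{0})^{6}E(t_{0},\psi)-4\int_{t}^{t_{0}}a(s)^{3}\dot{a}(s)\,F(s,\psi)^{2}\,ds\]
with $F(s,\psi)^{2}:=\int_{\M}|\nabbar\psi(s,\cdot)|_{\g}^{2}\vol{\M}$, the task reduces to showing $a(t_{0})^{6}E(t_{0},\psi)>4\int_{0}^{t_{0}}a^{3}\dot{a}\,F^{2}\,ds$.

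I would bound the right-hand side by applying Lemma \ref{lem:H1-est} and squaring via $(x+y)^{2}\leq 2(x^{2}+y^{2})$, obtaining $F(s,\psi)^{2}\leq 4(E(t_{0},\psi)+E_{1}(t_{0},\psi))I(s)^{2}+2F(t_{0},\psi)^{2}$ where $I(s):=\int_{s}^{t_{0}}a(t_{0})^{3}/a(r)^{3}\,dr$. Using $a(r)=r^{2/(3\gamma)}$ and $a(s)^{3}\dot{a}(s)=\tfrac{2}{3\gamma}s^{8/(3\gamma)-1}$, the integral $\int_{0}^{t_{0}}a^{3}\dot{a}\,ds=a(t_{0})^{4}/4$ is immediate, while the square of $I(s)=t_{0}^{2/\gamma}(s^{1-2/\gamma}-t_{0}^{1-2/\gamma})/(2/\gamma-1)$ expands into three monomials in $s$ which each integrate against $a^{3}\dot{a}$ over $[0,t_{0}]$ to a multiple of $a(t_{0})^{6}t_{0}^{2-4/(3\gamma)}$; collecting the three contributions produces exactly the bracket $\tfrac{3\gamma}{8}-\tfrac{2}{1+2/(3\gamma)}+\tfrac{1}{2-4/(3\gamma)}$ encoded in $G$, and the resulting estimate is
\[4\int_{0}^{t_{0}}a^{3}\dot{a}\,F^{2}\,ds\leq Gt_{0}^{2-4/(3\gamma)}a(t_{0})^{6}\bigl(E(t_{0},\psi)+E_{1}(t_{0},\psi)\bigr)+2a(t_{0})^{4}F(t_{0},\psi)^{2}.\]

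It then remains to recast the sufficient condition $a(t_{0})^{6}E(t_{0},\psi)>Gt_{0}^{\alpha}a(t_{0})^{6}(E(t_{0},\psi)+E_{1}(t_{0},\psi))+2a(t_{0})^{4}F(t_{0},\psi)^{2}$ (with $\alpha:=2-4/(3\gamma)$) in the stated form. Splitting $E(t_{0},\psi)$ and $E_{1}(t_{0},\psi)$ into their kinetic parts ($\int_{\M}|\del_{t}\psi(t_{0})|^{2}\vol{\M}$ and $\int_{\M}|\del_{t}\Lap\psi(t_{0})|^{2}\vol{\M}$) and their gradient parts (each scaled by $a(t_{0})^{-2}$), then dividing by $a(t_{0})^{6}$ and multiplying by $a(t_{0})^{2}$, reorganises the inequality into a single condition whose right-hand side has coefficient $(1+Gt_{0}^{\alpha})$ on $\int_{\M}|\nabbar\psi(t_{0})|_{\g}^{2}\vol{\M}$ and coefficient $Gt_{0}^{\alpha}$ on each of $a(t_{0})^{2}\int_{\M}|\del_{t}\Lap\psi(t_{0})|^{2}\vol{\M}$ and $\int_{\M}|\nabbar\Lap\psi(t_{0})|_{\g}^{2}\vol{\M}$, with left-hand side $(1-Gt_{0}^{\alpha})a(t_{0})^{2}\int_{\M}|\del_{t}\psi(t_{0})|^{2}\vol{\M}$. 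Hypotheses \eqref{eq:global-blowup-req1} and \eqref{eq:global-blowup-req2} are precisely this single condition split into two parts weighted by $\epsilon$ and $1-\epsilon$; the existence of $\epsilon\in(0,1)$ realising both is equivalent to the combined inequality, and the assumed non-vanishing of $\del_{t}\psi(t_{0},\cdot)$ is what guarantees the combined left-hand side is strictly positive so that such an $\epsilon$ can exist.

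The main technical hurdle will be the closed-form evaluation of $\int_{0}^{t_{0}}a^{3}\dot{a}\,I(s)^{2}\,ds$ and matching its prefactor to $G=4/(1-(2/(3\gamma))^{2})$. The Young weight $\mu=1$ used in $(x+y)^{2}\leq(1+\mu)x^{2}+(1+1/\mu)y^{2}$ for squaring the bound of Lemma \ref{lem:H1-est} turns out to be essentially forced: any other choice would distort the relative coefficients on $F(t_{0},\psi)^{2}$ and on $E(t_{0},\psi)+E_{1}(t_{0},\psi)$ in incompatible ways with the stated condition, so this choice must be identified by working backwards from the target inequality before beginning the computation.
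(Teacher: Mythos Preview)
Your proposal is correct and follows essentially the same route as the paper: the paper likewise starts from the flux identity \eqref{eq:key-divergence-theorem}--\eqref{eq:divergence-energy-flux}, bounds $F(s,\psi)$ via Lemma \ref{lem:H1-est}, squares with $(x+y)^2\le 2x^2+2y^2$, evaluates $\int_0^{t_0}a^3\dot{a}\,I(s)^2\,ds$ in closed form to extract the bracket defining $G$, and then splits the resulting single inequality into the kinetic and gradient pieces \eqref{eq:global-blowup-req1}--\eqref{eq:global-blowup-req2} after invoking Lemma \ref{lem:limit-equality} for the left-hand limit. Your identification of the Young weight $\mu=1$ as forced by the target constant is an accurate reading of why the paper's coefficients come out as stated.
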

\begin{proof}
Applying the results \eqref{eq:divergence-energy-flux} and \eqref{eq:key-divergence-theorem} from the energy-flux approach to the original energy estimates, it follows that
\[a(t)^6E(t,\psi)=a(t_0)^{6}E(t_0,\psi)-4\int_{t}^{t_0}\int_{\M_s}\dot{a}(s)\left\lvert\nabbar\psi(s,\cdot)\right\rvert_{\g}^2\vol{\M_s}ds\]
Thus, recalling $\vol{\M_s}=a(s)^3\vol{\M}$ and using Lemma \ref{lem:H1-est} for a lower bound, these estimates follow:
\begin{align*}
a(t)^6E(t,\psi)
\geq\,&a(t_0)^6 E(t_0,\psi)-\int_t^{t_0}4\dot{a}(s)a(s)^3\left(\sqrt{\int_{\M}\left\lvert\nabbar\psi(t_0,\cdot)\right\rvert_{\g}^2\vol{\M}}\,+\right.\\
&\left.+\sqrt{2}\sqrt{E(t_0,\psi)+E_1(t_0,\psi)}\cdot \int_s^{t_0}\frac{a(t_0)^3}{a(r)^3}\,dr\right)^2\,ds\\
\geq\,& a(t_0)^{6}E(t_0,\psi)-8\left(\int_{\M}\left\lvert\nabbar\psi(t_0,\cdot)\right\rvert_{\g}^2\,\vol{\M}\right)\int_t^{t_0}\dot{a}(s)a(s)^3\,ds\\
&-16a(t_0)^6[E(t_0,\psi)+E_1(t_0,\psi)]\int_t^{t_0}\dot{a}(s)a(s)^3\left(\int_s^{t_0}a(r)^{-3}\,dr\right)^2\,ds\\
=\,&a(t_0)^{6}E(t_0,\psi)-2\left(\int_{\M}\left\lvert\nabbar\psi(t_0,\cdot)\right\rvert_{\g}^2\,\vol{\M}\right)\left(a(t_0)^4-a(t)^4\right)\\
&-16a(t_0)^6[E(t_0,\psi)+E_1(t_0,\psi)]\int_t^{t_0}\dot{a}(s)a(s)^3\left(\int_s^{t_0}a(r)^{-3}\,dr\right)^2\,ds \numberthis \label{eq:intermediate-blowup-est}
\end{align*}
By Lemma \ref{lem:limit-equality}, the left hand side converges to $\left(1-\nicefrac2{\gamma}\right)^2\|A\|_{L^2\left(\M\right)}^2$, so it only needs to be shown that the right hand side is strictly greater than zero as $t\to 0$. One quickly collects
\[\left(\int_s^{t_0}a(r)^{-3}\,dr\right)^2=\left(\int_s^{t_0}r^{\frac2\gamma}\,dr\right)^2=\left(\frac{t_0^{1-\frac2\gamma}-s^{1-\frac2\gamma}}{1-\frac2\gamma}\right)^2\,\]
and
\begin{align*}
\dot{a}(s)a(s)^3\left(\int_s^{t_0}a(r)^{-3}\,dr\right)^2&=\frac2{3\gamma}s^{\left(\frac2{3\gamma}-1\right)+\frac2{\gamma}}\left(\frac{t_0^{1-\frac2\gamma}-s^{1-\frac2\gamma}}{1-\frac2\gamma}\right)^2\\
&=\frac2{3\gamma\left(1-\frac2{\gamma}\right)^2}\left(s^{\frac8{3\gamma}-1}t_0^{2-\frac4{\gamma}}-2s^{\frac2{3\gamma}}t_0^{1-\frac2\gamma}+s^{1-\frac4{3\gamma}}\right)\,.
\end{align*}
After taking the limit $t\to 0$, the right hand side of \eqref{eq:intermediate-blowup-est} now, using the above formula to simplify the final term, becomes
\begin{align*}
&\,t_0^{\frac4{\gamma}}E(t_0,\psi)-2t_0^{\frac8{3\gamma}}\int_{\M}\left\lvert\nabbar\psi(t_0,\cdot)\right\rvert_{\g}^2\,\vol{\M}\\
&-\frac{32}{3\gamma}\frac{t_0^{\frac4{\gamma}}(E(t_0,\psi)+E_1(t_0,\psi))}{\left(1-\frac2{\gamma}\right)^2}\left(\frac{3\gamma}8-\frac2{1+\frac2{3\gamma}}+\frac1{2-\frac4{3\gamma}}\right)t_0^{2-\frac4{3\gamma}}\\
=\,&t_0^{\frac4{\gamma}}E(t_0,\psi)-2t_0^{\frac8{3\gamma}}\int_{\M}\left\lvert\nabbar\psi(t_0,\cdot)\right\rvert_{\g}^2\,\vol{\M}-G(E(t_0,\psi)+E_1(t_0,\psi))t_0^{2+\frac8{3\gamma}}\\
=\,&t_0^{\frac4{\gamma}}\left(1-Gt_0^{2-\frac4{3\gamma}}\right)\int_{\M}\lvert\del_t\psi(t_0,\cdot)\rvert^2\vol{\M}\\
&-Gt_0^{\frac4{\gamma}}t_0^{2-\frac4{3\gamma}}\int_{\M}\lvert\del_t\Lap\psi(t_0,\cdot)\rvert^2\vol{\M}\\
&-t_0^{\frac8{3\gamma}}\left(1+Gt_0^{2-\frac4{3\gamma}}\right)\int_{\M}\left\lvert\nabbar\psi(t_0,\cdot)\right\rvert^2_{\g}\vol{\M}-Gt_0^{\frac8{3\gamma}}t_0^{2-\frac4{3\gamma}}\int_{\M}\left\lvert\nabbar\Lap\psi(t_0,\cdot)\right\rvert_{\g}^2\,\vol{\M}\,.
\end{align*}
One now easily checks that if the conditions \eqref{eq:global-blowup-req1} and \eqref{eq:global-blowup-req2} are satisfied, this is positive, i.e.~$\|A\|_{L^2\left(\M\right)}>0$.\\
Finally, the simplification of $G$ is just a straightforward calculation.
\end{proof}

\begin{theorem}\label{thm:Global-Blowup-hyp}
Over type $-1$ warped product spacetimes $\left(\overline{M},\overline{g}\right)$ with $\gamma<2$, for sufficiently small $t_0>0$ and assuming $\del_t\psi(t_0,\cdot)$ is not identically zero, the equivalent statement to Theorem \ref{thm:Global-Blowup} holds when replacing $G$ with a suitably large constant $\tilde{G}$.
\end{theorem}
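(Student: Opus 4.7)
The plan is to mirror the proof of Theorem \ref{thm:Global-Blowup} line by line, replacing explicit calculations involving $a(t)=t^{2/(3\gamma)}$ with the asymptotic estimates $a(t)\simeq t^{2/(3\gamma)}$ and $\int_t^{\infty}a(s)^{-3}ds\simeq t^{1-2/\gamma}$ from Lemma \ref{lem:scale-factor}. Starting from the energy-flux identity obtained by combining \eqref{eq:divergence-energy-flux} and \eqref{eq:key-divergence-theorem}, namely
\[a(t)^6E(t,\psi)=a(t_0)^6E(t_0,\psi)-4\int_t^{t_0}\dot{a}(s)a(s)^3\int_{\M}\lvert\nabbar\psi(s,\cdot)\rvert_{\g}^2\,\vol{\M}\,ds,\]
I would insert the Lemma \ref{lem:H1-est} upper bound on the inner integral, expand with an elementary $(x+y)^2\le 2x^2+2y^2$ step, and then take $t\downarrow 0$, invoking Lemma \ref{lem:limit-equality} in type $-1$ to identify the left-hand side with $\lVert A\rVert_{L^2(\M)}^2$.

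Positivity of the right-hand side then reduces to controlling the two scale-factor integrals
\[\int_0^{t_0}\dot{a}(s)a(s)^3\,ds\qquad\text{and}\qquad\int_0^{t_0}\dot{a}(s)a(s)^3\left(\int_s^{t_0}a(r)^{-3}\,dr\right)^{\!2}ds.\]
The first equals $\tfrac14 a(t_0)^4$ exactly. For the second, shrinking $t_0$ so that the asymptotic comparisons from Lemma \ref{lem:scale-factor} hold uniformly with some constant $C_a$ depending on $\g$ and $\rho(t_0)$, one has $a(s)\lesssim s^{2/(3\gamma)}$, $\dot{a}(s)\lesssim s^{2/(3\gamma)-1}$ and $\int_s^{t_0}a(r)^{-3}\,dr\lesssim s^{1-2/\gamma}-t_0^{1-2/\gamma}$, so that the integrand is dominated by a constant multiple of $s^{1-4/(3\gamma)}$ up to lower-order cross terms, which is integrable on $[0,t_0]$ precisely because $\gamma>\nicefrac23$. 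Carrying out the integral produces an upper bound of the form $\tilde c\cdot a(t_0)^6\cdot t_0^{2-4/(3\gamma)}$, structurally identical to the type-$0$ case.

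Assembling these bounds and splitting each energy into its $\del_t$ and spatial-gradient contributions yields an inequality of the shape
\[\lVert A\rVert_{L^2(\M)}^2\geq a(t_0)^6E(t_0,\psi)-2a(t_0)^4\int_{\M}\lvert\nabbar\psi(t_0,\cdot)\rvert_{\g}^2\,\vol{\M}-\tilde{G}\,a(t_0)^6\,t_0^{2-4/(3\gamma)}\bigl(E(t_0,\psi)+E_1(t_0,\psi)\bigr)\]
for a constant $\tilde{G}=\tilde{G}(\gamma,C_a)>0$. The hyperbolic analogues of \eqref{eq:global-blowup-req1}--\eqref{eq:global-blowup-req2}, obtained from the type-$0$ conditions by replacing $G$ with $\tilde{G}$ (and optionally writing the $t_0$-exponents in terms of $a(t_0)$), then force the right-hand side to be strictly positive, giving $\lVert A\rVert_{L^2(\M)}>0$.

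The main obstacle is the bookkeeping of asymptotic constants: since Lemma \ref{lem:scale-factor} only provides $a(t)\simeq t^{2/(3\gamma)}$ rather than an equality, the closed-form coefficient $G=4/(1-(2/(3\gamma))^2)$ from Theorem \ref{thm:Global-Blowup} is absorbed into a larger abstract constant $\tilde{G}$ that carries $C_a$ together with the contributions of the cross terms in expanding $(s^{1-2/\gamma}-t_0^{1-2/\gamma})^2$. Fixing $t_0$ small enough that all the asymptotic bounds hold uniformly on $(0,t_0]$ keeps $\tilde{G}$ finite, and with this replacement the rest of the type-$0$ argument transfers verbatim.
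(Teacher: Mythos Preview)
Your proposal is correct and follows essentially the same route as the paper: you start from the energy-flux identity, insert Lemma~\ref{lem:H1-est}, invoke Lemma~\ref{lem:limit-equality} for the type~$-1$ limit, and then replace the explicit type~$0$ evaluation of $\int_0^{t_0}\dot{a}(s)a(s)^3\bigl(\int_s^{t_0}a(r)^{-3}\,dr\bigr)^2\,ds$ by an upper bound coming from the two-sided asymptotics $a(t)\simeq t^{2/(3\gamma)}$ of Lemma~\ref{lem:scale-factor}, absorbing the comparison constants into $\tilde{G}$. The paper makes this slightly more explicit by fixing constants $k_1\le a(t)/t^{2/(3\gamma)}\le k_2$ and $\dot{a}(t)\le k_3\,t^{2/(3\gamma)-1}$ and setting $\tilde{G}=G\cdot k_3k_2^3/k_1^6$, but this is exactly the bookkeeping you anticipate.
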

\begin{proof}[Proof of Theorem \ref{thm:Global-Blowup-hyp}]
Up to \eqref{eq:intermediate-blowup-est}, the proof is identical to the type $0$ setting, where the limit of the left hand side even converges precisely to $\|A\|_{L^2\left(\M\right)}^2$ by Lemma \ref{lem:limit-equality}. The only thing that needs to be done is to track how $a$ only behaving like $t^{\frac2{3\gamma}}$ \textit{asymptotically} influences the terms on the right hand side of \eqref{eq:intermediate-blowup-est}. Note that, by Lemma \ref{lem:scale-factor}, suitable $k_1<1<k_2$ and exist for $t_0>0$ small enough such that
\begin{equation*}
k_1t^{\frac2{3\gamma}}\leq a(t)\leq k_2t^{\frac2{3\gamma}}
\end{equation*}
is satisfied for all $0<t<t_0$. Furthermore, one obtains for some $k_3>0$:
\[0\leq\dot{a}(t)=\sqrt{\frac{8\pi B}3 a(t)^{2-3\gamma}+1}\leq\sqrt{k_1^{2-3\gamma}}\sqrt{\frac{8\pi B}3 t^{\frac4{3\gamma}-2}+1}\leq k_3 t^{\frac2{3\gamma}-1}\]

Hence, one checks:
\[\int_t^{t_0}\dot{a}(s)a(s)^3\left(\int_s^{t_0}a(r)^{-3}\,dr\right)^2ds\leq \frac{k_3k_2^3}{k_1^6}\int_t^{t_0}\frac2{3\gamma}s^{\frac2{3\gamma}-1}s^{\frac2{\gamma}}\left(\int_s^{t_0}r^{-\frac2\gamma}\,dr\right)^2\,ds\]
Setting
\[\tilde{G}=G\cdot\frac{k_3k_2^3}{k_1^6},\]
one now performs precisely the same calculations as in type $0$ on these terms and the right hand side of \eqref{eq:intermediate-blowup-est} becomes
\begin{align*}
&\,a(t_0)^6E(t_0,\psi)-2a(t_0)^4\int_{\M}\left\lvert\nabbar\psi(t_0,\cdot)\right\rvert_{\g}^2\,\vol{\M}\\
&-\frac{32}{3\gamma}\frac{a(t_0)^6(E(t_0,\psi)+E_1(t_0,\psi))}{\left(1-\frac2{\gamma}\right)^2}\left(\frac{3\gamma}8-\frac2{1+\frac2{3\gamma}}+\frac1{2-\frac4{3\gamma}}\right)\frac{k_3k_2^3}{k_1^6}t_0^{2-\frac4{3\gamma}}\\
=&\,a(t_0)^6E(t_0,\psi)-2a(t_0)^4\int_{\M}\left\lvert\nabbar\psi(t_0,\cdot)\right\rvert_{\g}^2\,\vol{\M}-\tilde{G}a(t_0)^6t_0^{2-\frac4{3\gamma}}\left(E(t_0,\psi)+E_1(t_0,\psi\right))\\
=&\,a(t_0)^6\left(1-\tilde{G}t_0^{2-\frac4{3\gamma}}\right)\int_{\M}\lvert\del_t\psi(t_0,\cdot)\rvert^2\vol{\M}\\
&-\tilde{G}t_0^{2-\frac4{3\gamma}}a(t_0)^6\int_{\M}\lvert\del_t\Lap\psi(t_0,\cdot)\rvert^2\vol{\M}\\
&-a(t_0)^4\left[\left(1+\tilde{G}t_0^{2-\frac4{3\gamma}}\right)\int_{\M}\left\lvert\nabbar\psi(t_0,\cdot)\right\rvert^2_{\g}\vol{\M}-\tilde{G}t_0^{2-\frac4{3\gamma}}\int_{\M}\left\lvert\nabbar\Lap\psi(t_0,\cdot)\right\rvert_{\g}^2\,\vol{\M}\right]
\end{align*}
Again, one now just checks \eqref{eq:global-blowup-req1} and \eqref{eq:global-blowup-req2}, with $G$ replaced by $\tilde{G}$, to ensure that this is strictly larger than zero, proving the statement.
\end{proof}

Finally, we can also formulate \todo{an improved} ($\left(\M,\g\right)$-dependent) criterion on whether $A$ is pointwise non-vanishing:

\begin{theorem}\label{thm:pointwise-blowup}
Consider a warped product spacetime $\left(\overline{M},\overline{g}\right)$ of type $0$ or $-1$ with $\gamma<2$. Let $K>0$ be such that \[\|\phi\|_{C\left(\M\right)}^2\leq K^2\left(\|\phi\|^2_{L^2\left(\M\right)}+\|\Lap\phi\|^2_{L^2\left(\M\right)}\right)\] for all $\phi\in C^\infty\left(\M\right)$.  Further, let $\epsilon>0$, $\lvert\mathcal{A}\rvert>\frac{K}{1-\nicefrac2{\gamma}}\epsilon$  (resp. $\lvert\mathcal{A}\rvert>K\epsilon$) for type $0$ (resp. type $-1$) and ${\psi}_{\text{hom}}(t,x):=\mathcal{A}\cdot t^{1-\nicefrac2{\gamma}}$ (resp. ${\psi}_{\text{hom}}(t,x)=C\cdot h(t)=\mathcal{A}\cdot\int_t^\infty a(s)^{-3}\,ds$) be homogeneous waves. Then, if
\[a(t_0)^6\left[E\left(t_0,\psi-\psi_{\text{hom}}\right)+E\left(t_0,\Lap\left(\psi-{\psi}_{\text{hom}}\right)\right)\right]\leq\epsilon^2\]
holds for some $t_0>0$, $A$ is non-vanishing.
\end{theorem}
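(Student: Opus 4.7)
The plan is to reduce the pointwise non-vanishing of $A$ to a $C(\M)$-bound on $A-\mathcal{A}$. To that end, I would set $\tilde{\psi} := \psi - \psi_{\text{hom}}$, which is a smooth wave (since $\psi_{\text{hom}}$ is one, by Remark \ref{rem:stationary-waves}) and therefore, by Theorem \ref{thm:main} applied to $\tilde{\psi}$ together with linearity of the blow-up assignment, has the spatially smooth profile $A - \mathcal{A}$ (treating $\mathcal{A}$ as the constant function). Moreover, by Corollary \ref{cor:wave-formulas}, $\Lap\tilde{\psi}$ is also a smooth wave, whose profile is $\Lap(A-\mathcal{A}) = \Lap A$, since $\mathcal{A}$ is constant. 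It therefore suffices to show that $\|A - \mathcal{A}\|_{C(\M)} < |\mathcal{A}|$, since this forces $|A(x)| > 0$ for every $x \in \M$.

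Next I would translate the hypothesis on the energies at $t_0$ into $L^2$-control on $A - \mathcal{A}$ and $\Lap A$. By Proposition \ref{prop:energy-1} applied to both $\tilde{\psi}$ and $\Lap\tilde{\psi}$, the quantities $t \mapsto a(t)^6 E(t,\tilde{\psi})$ and $t \mapsto a(t)^6 E(t,\Lap\tilde{\psi})$ are monotonically controlled by their values at $t_0$. Combined with Lemma \ref{lem:limit-equality} applied in turn to $\tilde{\psi}$ and to $\Lap\tilde{\psi}$ (whose profiles, as noted, are $A - \mathcal{A}$ and $\Lap A$ respectively), this yields
\begin{align*}
\int_{\M} |A - \mathcal{A}|^2 \,\vol{\M} \;&\leq\; C_\gamma\, a(t_0)^6 E(t_0, \tilde{\psi}),\\
\int_{\M} |\Lap A|^2 \,\vol{\M} \;&\leq\; C_\gamma\, a(t_0)^6 E(t_0, \Lap\tilde{\psi}),
\end{align*}
where $C_\gamma = (1-\nicefrac{2}{\gamma})^{-2}$ in type $0$ and $C_\gamma = 1$ in type $-1$ (recall $a(t_0)^6 = t_0^{\nicefrac{4}{\gamma}}$ in type $0$). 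Summing and invoking the standing hypothesis bounds the total by $C_\gamma \epsilon^2$.

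The last step is to upgrade this $L^2$-control to a $C(\M)$-estimate using the assumed Sobolev-type inequality. Applying it to $\phi = A - \mathcal{A}$ (noting again that $\Lap \phi = \Lap A$) gives
\[
\|A - \mathcal{A}\|_{C(\M)}^2 \leq K^2 \left( \|A - \mathcal{A}\|_{L^2(\M)}^2 + \|\Lap A\|_{L^2(\M)}^2 \right) \leq K^2 C_\gamma\, \epsilon^2.
\]
Taking square roots, the right-hand side equals $K \epsilon$ in type $-1$ and $\nicefrac{K}{|1-\nicefrac{2}{\gamma}|}\, \epsilon$ in type $0$; the hypotheses on $|\mathcal{A}|$ (reading $1-\nicefrac{2}{\gamma}$ with absolute value in type $0$, which is harmless since $\gamma<2$ forces a sign but the constant appears squared upstream) are exactly designed so that this bound is strictly less than $|\mathcal{A}|$. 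Hence $\|A-\mathcal{A}\|_{C(\M)} < |\mathcal{A}|$, and $A$ is pointwise non-vanishing.

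I do not expect a genuine obstacle here: once the right object ($\psi - \psi_{\text{hom}}$) is isolated, the proof is essentially the linearity of the profile assignment combined with the energy monotonicity of Section \ref{subsec:energy-estimates} and the profile-identification Lemma \ref{lem:limit-equality}. The only bookkeeping that requires mild care is (i) checking that the profile of $\psi - \psi_{\text{hom}}$ really is $A - \mathcal{A}$ (rather than e.g.\ $A$ shifted by an error term) --- this is immediate from applying Theorem \ref{thm:main} directly to $\tilde{\psi}$ and using that $\psi_{\text{hom}}/\psi_{\text{hom}} \equiv \mathcal{A}/\mathcal{A} = 1$ --- and (ii) checking that the two scalar constants $C_\gamma$ coming from Lemma \ref{lem:limit-equality} match those appearing in the hypothesis on $|\mathcal{A}|$.
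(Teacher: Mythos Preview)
Your proposal is correct and follows essentially the same approach as the paper's proof: both set $\tilde{\psi}=\psi-\psi_{\text{hom}}$, combine Proposition~\ref{prop:energy-1} with Lemma~\ref{lem:limit-equality} (applied to $\tilde{\psi}$ and $\Lap\tilde{\psi}$) to control $\|A-\mathcal{A}\|_{L^2}$ and $\|\Lap A\|_{L^2}$, and then apply the Sobolev-type inequality. The only minor point to make explicit is that the profile of $\Lap\tilde{\psi}$ equals $\Lap(A-\mathcal{A})$; this follows from the $C^\infty$ (in particular $C^2$) convergence of $\tilde{\psi}/\psi_{\text{hom}}$ in Theorem~\ref{thm:main}, which the paper invokes directly.
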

\begin{proof}
Only type $-1$ will be proven since type $0$ follows identically, exchanging $h(t)$ with $t^{1-\frac2{\gamma}}$ and adapting for the differences in scaling that causes.\\
First, note that a suitable $K>0$ exists since $\Lap$ is an elliptic operator of second order. Further, ${\psi}_{\text{hom}}$ and hence also $\psi-{\psi}_{\text{hom}}$ are smooth waves. In particular, we obtain
\[\|A-\mathcal{A}\|_{L^2\left(\M\right)}^2=\lim_{t\to0}a(t)^6E(t,\psi-{{\psi}_{\text{hom}}})\leq a(t_0)^6E(t_0,\psi-{\psi}_{\text{hom}}).\]
By Theorem \ref{thm:main} for $\gamma<2$, $\Lap\left(\frac{\psi-\overline{\psi}}{t^{1-\frac2{\gamma}}}\right)\rightarrow \Lap(A-\mathcal{A})=\Lap A$
holds as $t\to 0$ since $\frac{(\psi-\psi_{\text{hom}})(t,\cdot)}{h(t)}$ converges to $A-\mathcal{A}$ in $C^2\left(\M\right)$, and we obtain with Proposition \ref{prop:energy-1}:
\begin{align*}
&\,\left(\|A-\mathcal{A}\|_{L^2\left(\M\right)}^2+\|\Lap(A-\mathcal{A})\|_{L^2\left(\M\right)}^2\right)\\
\leq&\,a(t_0)^{6}\left[E\left(t_0,\psi-{\psi}_{\text{hom}}\right)+E_1\left(t_0,\psi-{\psi}_{\text{hom}}\right)\right]\leq\epsilon^2
\end{align*}
By definition of $K$, it now follows that, for any $x\in\M$,
\[\lvert A(x)-\mathcal{A}\rvert^2\leq K^2\left(\|A-\mathcal{A}\|_{L^2\left(\M\right)}^2+\|\Lap\left(A-\mathcal{A}\right)\|_{L^2(\M)}^2\right)\leq K^2\epsilon^2\]
and thus by assumption
\[\lvert A(x)\rvert\geq \lvert \mathcal{A}\rvert-K\epsilon>0\,.\]
\end{proof}

\begin{remark}\label{rem:failure}
One cannot \todo{extend the weaker blow-up criteria within this final subsection to the stiff case, or at least not with a similar approach}, which we will now quickly illustrate for the framework associated with $\kappa=-1$ (similar issues occur in the setting associated with flat space): Referring to \eqref{eq:energy-convergence-all-terms} and looking at the first term on the right hand side, one sees that the energy convergence now no longer holds since, by Proposition \ref{prop:energy-3}, one can only obtain
\[a(t)^6h(t)^2E\left(t,\hat{\psi}\right)\leq a(t_0)^6E\left(t_0,\hat{\psi}\right)h(t)^2\]
which would diverge since $h(t)$ diverges logarithmically approaching $t=0$. Thus, one would need to rescale the energy by some function approaching $0$ toward the Big Bang faster than $a(t)^6$ to obtain any type of energy convergence. This rescaling would then have to be carried over the proof of Lemma \ref{lem:limit-equality}, or more precisely \eqref{eq:limit-equality-terms-listed}, killing both the entire left hand side since we know that term to be bounded by Proposition $\ref{prop:energy-1}$ and also the $\|A\|^2_{L^2\left(\M\right)}$-term on the right hand side used to relate the energies with $A$. Thus, this \todo{Proposition} and with it the entire approach to our \todo{improved} global and pointwise blow-up conditions as in Theorem \ref{thm:Global-Blowup-hyp} fail.\\

\todo{These improved criteria have multiple advantages: The global blow-up conditions only depend on geometric data in so far as they occur within the volume form and the Laplace-Beltrami operator. In particular, no abstract metric dependent constants arising from Sobolev embedding have to be computed, which makes their application to inhomogeneous geometries significantly easier than Theorem \ref{thm:pointwise-blowup-stiff-incl}. Further, Theorem \ref{thm:pointwise-blowup} state similar initial data requirements to Corollary \ref{cor:pointwise-blowup-stiff-incl}, only at lower regularity since, essentially, the highest order blow-up rate is \enquote{strong enough} to force energy convergence. On the other hand, only Theorem \ref{thm:pointwise-blowup-stiff-incl} and Corollary \ref{cor:pointwise-blowup-stiff-incl} allow to treat one of the most physically relevant cases, the stiff fluid. Altogether, this indicates that the blow-up of matter to FLRW solutions to the Einstein scalar field equations with $\kappa=-1$ is generic, but that one must likely take some care with the behaviour of suitable energies toward the Big Bang hypersurface and impose somewhat harsh open initial data conditions to see this genericity.}
\end{remark}


\addtocounter{page}{1}
\addcontentsline{toc}{chapter}{Bibliography}
\addtocounter{page}{-1}

\bibliographystyle{habbrv}

\bibliography{manuscript}

\end{document}